\def\thtext#1{
  \catcode`@=11
  \gdef\@thmcountersep{. #1}
  \catcode`@=12
}
\def\threst{
  \catcode`@=11
  \gdef\@thmcountersep{.}
  \catcode`@=12
}
\theoremstyle{plain}
\newtheorem{thm}{Theorem}
\newtheorem{prop}{Proposition}[section]
\newtheorem{cor}[prop]{Corollary}
\newtheorem{ass}[prop]{Assertion}
\newtheorem{lem}[prop]{Lemma}
\theoremstyle{definition}
\newtheorem{rk}[prop]{Remark}
\newtheorem{agree}[prop]{Agreement}
 \def\.{.\spacefactor\@m}
\renewcommand{\a}{\alpha}
\newcommand{\D}{\Delta}
\newcommand{\e}{\varepsilon}
\newcommand{\g}{\gamma}
\newcommand{\G}{\Gamma}
\newcommand{\om}{\omega}
\newcommand{\Om}{\Omega}
\renewcommand{\r}{\rho}
\newcommand{\s}{\sigma}
\renewcommand{\t}{\tau}
\renewcommand{\v}{\varphi}
\newcommand{\Y}{\Upsilon}
\newcommand{\cA}{\mathcal{A}}
\newcommand{\cB}{\mathcal{B}}
\newcommand{\cD}{\mathcal{D}}
\newcommand{\cE}{\mathcal{E}}
\newcommand{\cG}{\mathcal{G}}
\newcommand{\cL}{\mathcal{L}}
\newcommand{\cM}{\mathcal{M}}
\newcommand{\cN}{\mathcal{N}}
\newcommand{\cO}{\mathcal{O}}
\newcommand{\cP}{\mathcal{P}}
\newcommand{\cQ}{\mathcal{Q}}
\newcommand{\cBT}{\mathcal{B\!T}}
\newcommand{\N}{\mathbb{N}}
\newcommand{\R}{\mathbb{R}}
\newcommand{\Z}{\mathbb{Z}}
\newcommand{\rom}[1]{{\em #1}}
\renewcommand{\)}{\rom)}
\renewcommand{\:}{\colon}
\newcommand{\0}{\emptyset}
\renewcommand{\>}{\rangle}
\renewcommand{\c}{\circ}
\renewcommand{\d}{\partial}
\newcommand{\oPi}{\stackrel{\raise-2pt\hbox{$\c$}}\Pi}
\newcommand{\oW}{\stackrel{\raise-2pt\hbox{$\c$}}W}
\newcommand{\sm}{\setminus}
\renewcommand{\sp}{\supset}
\renewcommand{\ss}{\subset}
\newcommand{\im}{{\operatorname{im}\,}}
\newcommand{\Len}{{\operatorname{Len}}}
\newcommand{\mf}{{\operatorname{mf}}}
\newcommand{\mpf}{{\operatorname{mpf}}}
\newcommand{\smt}{{\operatorname{smt}}}
\newcommand{\SMT}{\operatorname{SMT}}
\newcommand{\mpn}{{\operatorname{mpn}}}
\newcommand{\MPN}{\operatorname{MPN}}
\def\ig#1#2#3#4{\begin{figure}[!ht]\begin{center}%
\includegraphics[height=#2\textheight]{#1.eps}\caption{#4}\label{#3}%
\end{center}\end{figure}}
\title{Analytic Deformations of Minimal Networks}
\author{A.~Ivanov \and A.~Tuzhilin}
\date{}							
\begin{document}
\maketitle

\begin{abstract}
A behavior of extreme networks under deformations of their boundary sets is investigated. It is shown that analyticity  of a deformation of boundary set guarantees preservation of the networks types for minimal spanning trees, minimal fillings and so-called stable shortest trees in the Euclidean space.
\end{abstract}

\section*{Introduction}
\markright{Introduction}
An interest to the behavior of shortest networks under deformations of their boundary sets goes back to the works devoted to Steiner ratio of Euclidean plane investigations. In 90th, in Australian school, some non-trivial estimates on the Steiner ratio for small boundary sets were obtained by means of so-called variational approach, i.e., using a control on behavior of the length functions of minimal spanning tree and shortest tree under small deformations of a boundary set (which are sometimes referred as variations), see~\cite{RubThom}, \cite{Thom}, and~\cite{Weng}.

Investigation of the length functions of extreme networks were continued by the authors. In paper~\cite{ITCalc}, for shortest trees and for minimal spanning trees in the Euclidean space it is proved an existence of the derivative of their length functions with respect to one-parametrical deformations of the boundary sets, and formulas for those derivatives were derived. Namely, it turns out that the derivative is equal to the derivative of some minimal parametric tree (a tree with a fixed combinatorial structure) which is a shortest one (a minimal spanning one, respectively) for the initial boundary.

But it is not true that for a sufficiently small deformation of the boundary a structure of shortest tree or of minimal spanning tree can be chosen changeless. A corresponding example can be easily constructed even for a three-point boundary set in the Euclidean plane, see Figure~\ref{fig:examp}. One fixed vertex $O$ of the triangle is located in the origin, the other fixed vertex $A$ belongs to the ray forming the angle of $2\pi/3$ with the abscissa axis, and the third vertex $B$ is located at the abscissa axis in the initial time and moves along an oscillating curve which makes infinitely many oscillations around the abscissa axis in any neighborhood of the initial point. When the point $B$ is in the upper half-plane, then the shortest network for the set $AOB$ has an additional vertex, where three its edges meet each other, and when $B$ is in the lower half-plane, then the shortest network for $AOB$ consists of two segments, $[A,O]$ and $[O,B]$.

\ig{examp}{0.25}{fig:examp}{A smooth deformation generating infinitely many changes of the shortest tree structure.}

This example can be modified easily to generate infinitely many changes of non-degenerate shortest tree structures. Consider a convex quadrangle in the plane, and let $O$ be the intersection point of its diagonals. Assume that the vertices of this quadrangle can be connected by two locally minimal binary trees (here binary tree is a tree, whose boundary vertices have degree $1$, and all the other ones have degree $3$). For each such tree consider the pair of vertical angles having vertex at $O$ and subtending the sides connecting adjacent boundary vertices. Then, in accordance with~\cite{Pollak}, the tree corresponding to the least angle is shorter, and hence, is a shortest one, i.e., it is a Steiner minimal tree (and if the angles are equal o each other, then the both trees are shortest).

Take a square as the quadrangle. Then its vertices are connected by two shortest binary trees, see Figure~\ref{fig:examp2}. Let us move one of the vertices of the square similarly to the previous example, namely, its trajectory oscillates around  the straight line passing through the correspondent diagonal of the square. Then the shortest tree connecting the four point set consisting of the three fixed vertices of the square and the moving one changes its type each time as the moving vertex passes across the straight line (the combinatorial structure of the binary tree remains the same, but the way the tree is attached to the boundary set changes spasmodically). 

Notice that both our examples are based on the same effect, namely, on an infinite number of oscillations in a finite interval of the parameter changing. The deformation constructed are smooth but not analytic.

\ig{examp2}{0.25}{fig:examp2}{A smooth deformation of the boundary generating infinitely many changes of non-degenerate shortest tree structures.}

The main result of the present paper describes a class of deformations which such effects are impossible under. Main Theorem (Theorem~\ref{thm:main}) gives an answer for a sufficiently general case (see below a general statement of the problem in terms of so-called metric functionals). As applications, the classical cases of shortest trees and minimal spanning trees in Euclidean space are considered. It is shown (see Corollary~\ref{cor:Euc_case_mst}) that  {\em if under a one-parametric  deformation $W_t$, $t\in[0,1]$, each point of an initial boundary set $W_0$ moves along an analytic curve, then for sufficiently small positive values of the parameter $t$ the families $\cN_t$ of the combinatorial types of the minimal spanning trees with the boundary $W_t$ are the same and contained in $\cN_0$}. To prove similar result for shortest trees we need an additional assumptions of so-called stability, see below, of all shortest trees with the initial boundary $W_0$, see Corollary~\ref{cor:smt_gen}. The case of minimal fillings of finite metric spaces is also investigated, see Corollaries~\ref{cor:mf_gen} and~\ref{cor:mf_eucl}. Besides, for convenience, we include well-known formulas of the first and the second variation of the length of a segment in the Euclidean space, see Assertions~\ref{ass:deriv1} and~\ref{ass:deriv2}.

\section{Basic Notations and General Construction}
\markright{\thesection.~Basic Notations and General Construction}
Let $V$ be some finite set, whose elements are referred as \emph{vertices}, and $X$ be an arbitrary set  which is called an \emph{ambient space}. By $\cM(V,X)$ we denote the set of all mappings from $V$ to $X$. Let a numeration $V=\{v_1,\ldots,v_n\}$ of the vertices be fixed. Then each mapping $f\in\cM(V,X)$ is uniquely defined by the ``vector'' $\bigl(f(v_1),\ldots,f(v_n)\bigr)$ of its values that defines natural isomorphism between the set $\cM(V,X)$ and the Cartesian power $X^n$.

Let $\cD(X)$ be the set of all semi-metrics on $X$, then for each $f\in\cM(V,X)$ and each $\r\in\cD(X)$ the semi-metric $f^*(\r)\in\cD(V)$ is defined as follows: $f^*(\r)(v_i,v_j)=\r\bigl(f(v_i),f(v_j)\bigr)$. Since each semi-metric from  $\cD(V)$ is uniquely defined by the set of its $m=n(n-1)/2$ values on all pairs of distinct vertices from $V$, then the space $\cD(V)$ can be identified with the subset $\cD^m$ of the Euclidean space $\R^m$ consisting of all the vectors with non-negative coordinates such that some of their triplets meet the triangle inequalities. In more detail,  $\cD^m\ss\R^m$ is the subset of points of the following form:
$$
\bigl(r_{12},\ldots,r_{1n},r_{23},\ldots,r_{2n},\ldots,r_{n-1\,n}\bigr),
$$
such that the inequalities $r_{ij}\ge0$ and $\bigl|r_{ij}-r_{jk}\bigr|\le r_{ik}\le r_{ij}+r_{jk}$ are valid for all $1\le i<j<k\le n$. Thus, the vector $f^*(\r)$ has the above form, where $r_{ij}=f^*(\r)(v_i,v_j)$.

Each function $L\:D\to\R$, where $D\ss\cD^m$, is referred as a  \emph{metric functional}. A functional $L$ is said to be \emph{continuous}, if  $L$ is a continuous function. A functional $L$ is said to be \emph{smooth\/  \(analytic\/\)}, if it is a restriction of a smooth function (an analytic function, respectively) defined in an open subset $\Om\subset\R^m$, $\Om\sp D$. Similarly, \emph{continuous}, \emph{smooth\/} and \emph{analytic curves in $D\ss\cD^m$} are defined.

By a \emph{curve in a set $W$ passing through a point $w\in W$ in instant $t_0\in[a,b]$} we call a mapping $\g\:[a,b]\to W$ such that $\g(t_0)=w$. If $t_0=a$, then one says that the curve $\g$ \emph{goes out $w$}, and if $t_0=b$, then $\g$ \emph{comes to $w$}. A \emph{deformation $f_t$} of a mapping $f\in\cM(V,X)$ is a curve in  $\cM(V,X)$ passing through $f$. For each metric $\r\in\cD(X)$ the deformation $f_t$ generates the \emph{deformation $f_t^*(\r)$ of the metric $f^*(\r)$} which is a curve in $\cD^m$ passing through the point $f^*(\r)\in\cD^m$.

Let $\cL=\{L^1,\ldots,L^p\}$ be a non-empty family of metric functionals, and $D\ss\cD^m$ be the intersection of their domains. Then 
$$
\cL_{\min}=\min_i L^i\qquad \text{and}\qquad \cL_{\max}=\max_i L^i
$$
are defined on $D$. For each $r\in D$ by  $I_{\min}(\cL,r)$ we denote the set of the indices $i$ such that $\cL_{\min}(r)=L^i(r)$. Similarly, by $I_{\max}(\cL,r)$ we denote the set of the indices $i$ such that $\cL_{\max}(r)=L^i(r)$.

\section{Analytic Functions and Main Theorem}
\markright{\thesection.~Analytic Functions and Main Theorem}
The following result from Analytic Functions Theory is key one in our considerations.

\begin{prop}\label{prop:two_analitic_functions}
Let $f(x)$ and $g(x)$ be real functions defined in an open domain in the real line $\R$ analytic at a point  $x_0$, and $f(x_0)=g(x_0)$. Then there exists a neighborhood $U$ of the point $x_0$ such that either $f$ and $g$ coincide with each other in it, or $f(x)\ne g(x)$ for all $x\in U\sm\{x_0\}$.
\end{prop}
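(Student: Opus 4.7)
The plan is to reduce to a single analytic function by considering the difference $h(x) = f(x) - g(x)$, which is analytic at $x_0$ and satisfies $h(x_0) = 0$. The statement then becomes: if $h$ is analytic at $x_0$ and vanishes there, then in some neighborhood $U$ of $x_0$ either $h\equiv 0$ on $U$, or $h(x)\ne 0$ for every $x\in U\sm\{x_0\}$. This is essentially the classical fact that zeros of a nonzero real analytic function are isolated, but since the statement is self-contained it is cleanest to prove it from scratch using the power series expansion.

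First I would fix a disk $(x_0-\r,x_0+\r)$ on which $h$ is represented by a convergent power series
\[
h(x)=\sum_{k=0}^{\infty} a_k (x-x_0)^k,
\]
where $a_0=h(x_0)=0$. I would then split into two cases according to whether all coefficients $a_k$ vanish. In the first case $h$ is identically zero on the whole interval of convergence, which yields the first alternative with $U=(x_0-\r,x_0+\r)$.

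In the second case, let $n\ge 1$ be the smallest index with $a_n\ne 0$. Factoring, $h(x)=(x-x_0)^n\v(x)$, where
\[
\v(x)=\sum_{k=0}^{\infty} a_{n+k}(x-x_0)^k
\]
is analytic on the same interval and $\v(x_0)=a_n\ne 0$. By continuity of $\v$ there is a smaller neighborhood $U$ of $x_0$ on which $\v$ does not vanish; on this $U$ the factorization forces $h(x)=0$ precisely when $x=x_0$, giving the second alternative.

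The argument is essentially routine because the two alternatives correspond exactly to the two possibilities for the power series. The only point that requires any care is the rearrangement/convergence of the factored series $\v$, which follows immediately from the fact that its coefficients are a tail of the coefficients of $h$ and hence inherit the same radius of convergence; so there is no real obstacle.
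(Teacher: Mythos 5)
Your argument is correct: passing to the difference $h=f-g$, expanding it in its convergent power series about $x_0$, and splitting on whether all coefficients vanish or factoring out $(x-x_0)^n$ with $\v(x_0)=a_n\ne0$ is the standard proof that zeros of a non-identically-vanishing real analytic function are isolated. The paper itself offers no proof of this proposition --- it is quoted as a known result from analytic function theory --- so your write-up simply supplies the classical argument that the authors take for granted, and it does so without any gaps.
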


The following general result follows directly from the above definitions and Proposition~\ref{prop:two_analitic_functions}.

\begin{thm}\label{thm:main}
Let $\cL=\{L^1,\ldots,L^p\}$ be a family of metric functionals having a nonempty intersection $D\ss\cD^m$ of their domains. Consider an arbitrary mapping $f\in\cM(V,X)$ and its deformation  $f_t$, $t\in[a,b]$, where  $f=f_{t_0}$, $t_0\in [a,b]$. Let = $\r\in\cD(X)$ be an arbitrary semi-metric. Consider the corresponding semi-metric $f^*(\r)\in\cD^m$ and the corresponding curve $\g(t)=f_t^*(\r)$ in $\cD^m$ generating a deformation of the latter semi-metric. Let the curve $\g$ lie in $D$, so the set $I_{\min}(t)=I_{\min}\bigl(\cL,\g(t)\bigr)$ is defined for all  $t\in[a,b]$. Besides, let $\g$ be analytic at $t_0$, and all the functionals $L^i$ be analytic at $\g(t_0)$. Then
\begin{enumerate}
\item There exists a neighborhood $U\ss[a,b]$ of the point $t_0$, such that the sets $I_{\min}(t)$ are the same for all $t\in U\cap\{t>t_0\}$ \(for all $t\in U\cap\{t<t_0\}$, respectively\/\) and they are contained in $I_{\min}(t_0)$\/\rom;
\item if also $I_{\min}(t)=I_{\min}(t_0)$ for some $t\in U\sm\{t_0\}$, then the sets $I_{\min}(t)$ are the same for all $t\in U$.
\end{enumerate}
\end{thm}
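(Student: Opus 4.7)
The plan is to reduce everything to the finitely many pairwise differences $h_{ij}(t):=L^i(\g(t))-L^j(\g(t))$, $1\le i<j\le p$. Each composition $L^i\c\g$ is analytic at $t_0$ as a composition of analytic maps, so every $h_{ij}$ is analytic at $t_0$. I would apply Proposition~\ref{prop:two_analitic_functions} to the pair $L^i\c\g$, $L^j\c\g$ whenever $L^i(\g(t_0))=L^j(\g(t_0))$; for the remaining pairs, continuity of $h_{ij}$ together with $h_{ij}(t_0)\ne0$ already guarantees non-vanishing in a neighborhood. In both cases I obtain a neighborhood of $t_0$ in which $h_{ij}$ either vanishes identically, or is zero at most at $t_0$; in the latter case its sign is then constant on each one-sided punctured neighborhood of $t_0$.

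Let $U$ be the intersection of these finitely many neighborhoods. On each of $U\cap\{t>t_0\}$ and $U\cap\{t<t_0\}$, for every ordered pair $(i,j)$ exactly one of $L^i\c\g<L^j\c\g$, $L^i\c\g>L^j\c\g$, $L^i\c\g\equiv L^j\c\g$ holds throughout. Hence the tuple of values $\bigl(L^1(\g(t)),\ldots,L^p(\g(t))\bigr)$ induces the same pre-order at every point of each half-interval, which immediately forces $I_{\min}(t)$ to be constant on $U\cap\{t>t_0\}$ and on $U\cap\{t<t_0\}$. For the inclusion $I_{\min}(t)\ss I_{\min}(t_0)$ promised in item~(1), I would argue by contradiction: if $i\in I_{\min}(t)\sm I_{\min}(t_0)$ for some $t\in U\sm\{t_0\}$ and $j\in I_{\min}(t_0)$, then the strict inequality $L^i(\g(t_0))>L^j(\g(t_0))$ persists by continuity in a whole neighborhood of $t_0$, contradicting $L^i(\g(t))\le L^j(\g(t))$.

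For item~(2), suppose $I_{\min}(t_*)=I_{\min}(t_0)$ for some $t_*\in U\sm\{t_0\}$; say $t_*>t_0$. For any two indices $j,k\in I_{\min}(t_0)$ we then have $h_{jk}(t_*)=0$, which rules out the ``zero only at $t_0$'' alternative of the dichotomy above and forces $h_{jk}\equiv0$ on $U$; in particular, all values $L^j(\g(t))$ with $j\in I_{\min}(t_0)$ coincide on $U$. On the other hand, for any $i\notin I_{\min}(t_0)$ and $j\in I_{\min}(t_0)$ the strict inequality $L^i(\g(t_0))>L^j(\g(t_0))$ extends by continuity to all of $U$ after possibly shrinking it. Combining these two facts yields $I_{\min}(t)=I_{\min}(t_0)$ on $U$. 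The only real bookkeeping concern—and this is the main, though mild, obstacle—is that Proposition~\ref{prop:two_analitic_functions} provides a different neighborhood per pair $(i,j)$; since there are only $\binom{p}{2}$ such pairs, intersecting them is harmless, and no machinery beyond Proposition~\ref{prop:two_analitic_functions} and continuity is needed.
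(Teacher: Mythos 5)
Your proposal is correct and takes essentially the same approach as the paper: the paper gives no argument beyond the remark that Theorem~\ref{thm:main} ``follows directly from the above definitions and Proposition~\ref{prop:two_analitic_functions}'', and your pairwise application of that proposition to the analytic differences $L^i\c\g-L^j\c\g$ (with continuity handling the pairs that are already distinct at $t_0$, and a finite intersection of neighborhoods) is exactly the intended elaboration.
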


\begin{rk}
A similar result is valid for $I_{\max}(t)=I_{\max}\bigl(\cL,\g(t)\bigr)$ also.
\end{rk}

We also need the following analytic version of Implicit Function Theorem, see, for example~\cite{Horman}.

\begin{prop}\label{prop:explicit_analitic_fucntion}
Let $f_j(z,w)$, $j=1,\ldots,m$, be a set of real functions on real variables  $(z,w)=(z^1,\ldots,z^m,w^1,\ldots,w^n)$, which are analytic in a neighborhood of a point $(z_0,w_0)$, and let $f_j(z_0,w_0)=0$, $j=1,\ldots,m$, and
$$
\det\Bigl(\frac{\d f_j}{\d z^k}\Bigr)_{j,\,k=1}^m\ne0\ \text{at the point $(z_0,w_0)$} .
$$
Then in some neighborhood of the point $w_0$ the equation system  $f_j(z,w)=0$, $j=1,\ldots,m$, has uniquely defined analytic solutions $z^k(w)$, $k=1,\ldots,m$, such that $z(w_0)=z_0$.
\end{prop}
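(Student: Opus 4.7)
The plan is to separate existence and uniqueness from analyticity. First I would invoke the classical $C^\infty$ implicit function theorem: since each $f_j$ is real-analytic near $(z_0,w_0)$ it is certainly $C^1$ there, and the nondegeneracy of $\det(\d f_j/\d z^k)$ at $(z_0,w_0)$ produces a unique $C^1$ map $z=\varphi(w)$ in a neighborhood of $w_0$ with $\varphi(w_0)=z_0$ and $f_j(\varphi(w),w)=0$ for all $j$. This already settles the existence and uniqueness assertion; what remains is to upgrade $\varphi$ from $C^1$ to analytic.

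My preferred route is complexification. Each $f_j$ has a convergent power series expansion at $(z_0,w_0)$ in the real variables; reading the same series as a power series in complex variables gives a holomorphic extension $\tilde f_j$ to some open polydisc $\tilde U\ss\C^{m+n}$ centered at $(z_0,w_0)$. The complex Jacobian $\det(\d\tilde f_j/\d z^k)$ agrees with the real one at the center and so is nonzero, hence nonvanishing on a possibly smaller polydisc. An application of the holomorphic implicit function theorem then yields a unique holomorphic map $\tilde\varphi$ from a polydisc about $w_0\in\C^n$ into $\C^m$ with $\tilde\varphi(w_0)=z_0$ and $\tilde f_j(\tilde\varphi(w),w)=0$. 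Restricting $\tilde\varphi$ to real $w$ produces a real-analytic solution of the original system; by uniqueness in the $C^1$ version this restriction coincides with $\varphi$, and so $\varphi$ is itself analytic.

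The genuine difficulty is thus the holomorphic implicit function theorem. The standard way to establish it is the Cauchy method of majorants: build a formal power series solution by determining its coefficients recursively (the nondegeneracy of $\d f/\d z$ is exactly what makes the recursion uniquely solvable), and dominate that formal series termwise by the analytic solution of a scalar majorant problem --- typically a single quadratic equation in one variable with real positive coefficients --- whose convergence is elementary. An alternative avoiding complexification is to reduce the $m$-equation case to the scalar one inductively: solve $f_1=0$ for $z^1$, substitute into the remaining equations, and iterate, each single-variable step being the scalar analytic implicit function theorem proved in the same majorant style. Either way the passage from smooth to analytic is the step that does real work, and it is the place where I would expect to concentrate effort --- or, as the paper elects to do, simply cite~\cite{Horman}.
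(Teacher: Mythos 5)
The paper offers no proof of this proposition at all: it is stated as a known result with a pointer to~\cite{Horman}. Your outline is the standard textbook argument for exactly that cited theorem, and it is correct as a sketch: the $C^1$ implicit function theorem settles existence and uniqueness, complexification of the convergent power series gives holomorphic extensions $\tilde f_j$ with nonvanishing complex Jacobian near the center, the holomorphic implicit function theorem (provable by the majorant method you describe, or by induction on $m$ via the scalar case) produces a holomorphic solution, and identification with the real $C^1$ solution yields analyticity. One small point worth tightening: to know that the restriction of $\tilde\varphi$ to real $w$ is actually $\R^m$-valued, you should first invoke uniqueness to identify it with the real solution $\varphi$ (or use that the $\tilde f_j$ have real Taylor coefficients, so $w\mapsto\overline{\tilde\varphi(\bar w)}$ is another holomorphic solution and hence equals $\tilde\varphi$); as written you assert real-analyticity of the restriction slightly before the identification that justifies it. This is a minor ordering issue, not a gap in the method. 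Since the paper elects to cite rather than prove, there is no in-paper argument to compare against; your route is the expected one and, if carried out in full (in particular the convergence of the majorant series), would constitute a complete proof.
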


\section{Main Theorem in the Case of Networks}
\markright{\thesection.~Main Theorem in the Case of Networks}

For an arbitrary set $W$ by $\cP(W)$ we denote the set of all its subsets, and let  $W^{(k)}\ss\cP(W)$ stand for the set of all $k$-element subsets of $W$.

By an \emph{edge set $E$} on a set $V$ we call an arbitrary subset of $V^{(2)}$. Elements $e=\{v,w\}\in E$ are referred as \emph{edges\/} and are denoted by $vw$ or $wv$ for brevity. Each edge set $E$ generates the corresponding metric functional $L_E$ on $\cD^m$ as follows:  if $E=\0$, then put $L_E=0$, otherwise, 
$$
L_E(r_{12},\ldots,r_{n-1\,n})=\sum_{v_iv_j\in E}r_{ij},
$$
where $r_{12},\ldots,r_{n-1\,n}$ are Cartesian coordinates in $\R^m$. It is clear that the functional $L_E$ is analytic.

A pair $G=(V,E)$, where  $E\ss V^{(2)}$, is called by a (simple) \emph{graph}.  A \emph{generalized network of the type $G$ in $X$} is a mapping $\G\:V\to X$, and a deformation $\G_t$ of the mapping $\G$ is a \emph{deformation of the generalized network $\G$}. The number $L_E\bigl(\G^*(\r)\bigr)$ is called the \emph{length of the generalized network $\G$ with respect to the semi-metric $\r$} and is denoted by $\Len_\r(\G)$.

Let $\cE=\{E^1,\ldots,E^p\}$ be a non-empty family of edge sets on $V$, and $\cG=\bigl\{(V,E^1), \ldots, (V,E^p)\bigr\}$ be the corresponding family of the graphs $G^i=(V,E^i)$. Put $L^i=L_{E^i}$, and form the family $\cL=\{L^1,\ldots,L^p\}$ of functionals on $\cD^m$. Then $\cL_{\min}$ is called the \emph{length of minimal generalized network of the type $\cG$}.

Further, let $(X,\r)$ be a semi-metric space. Then for each $f\in\cM(V,X)$ the family of generalized networks $\cN=\cN(\cG)=\{\G^i\}$ is defined, where $\G^i$ stands for the network of the type $G^i$ that coincides with $f$ as a mapping from $V$ to $X$.  For each generalized network $\G^i$ from this family its length $\Len_\r(\G^i)$ is defined. Each generalized network  $\G^i$ such that $\Len_\r(\G^i)=\cL_{\min}\bigl(f^*(\r)\bigr)$ is called a  \emph{minimal generalized network in the family $\cN$}.

\begin{cor}\label{cor:general_nets_param}
Let $(X,\r)$ be an arbitrary semi-metric space, and $\cN(\cG)=\{\G^i\}$ be the family of generalized networks generated by a family of graphs $\cG$ and a mapping $f\in\cM(V,X)$. Consider a deformation $f_t$, $t\in[a,b]$, of these networks. Assume that the corresponding curve $\g(t)=f_t^*(\r)$ in $\cD^m$ is analytic at the point $t_0$. Then
\begin{enumerate}
\item There exists a neighborhood $U\ss[a,b]$ of the point $t_0\in[a,b]$ such that the types of minimal generalized networks in the family $\cN$ are the same for all  $t\in U\cap\{t>t_0\}$ \(for all  $t\in U\cap\{t<t_0\}$, respectively\/\) and they are contained among the types of minimal generalized networks in $\cN$ for $t=t_0$\/\rom;
\item If also the sets of types of minimal generalized networks in the family $\cN$ are the same for some $t\in U\sm\{t_0\}$ and $t_0$, then these sets are the same for all $t\in U$.
\end{enumerate}
\end{cor}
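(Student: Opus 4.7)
The plan is to derive this corollary as a direct specialization of the Main Theorem (Theorem~\ref{thm:main}) to the family of edge-length functionals associated with $\cG$. The key point is the dictionary between the abstract ``indices of minimizers'' in Theorem~\ref{thm:main} and the combinatorial ``types of minimal generalized networks'' appearing here.

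First I would set $L^i := L_{E^i}$ for each graph $G^i = (V,E^i)$ in $\cG$, obtaining the family of metric functionals $\cL = \{L^1,\ldots,L^p\}$. As noted in the paragraph preceding the corollary, each $L_{E^i}$ is a sum of coordinate functions on $\R^m$, and hence is defined on all of $\cD^m$ and is analytic there. Thus the common domain is $D = \cD^m$, the analyticity hypothesis on every $L^i$ at every point of $D$ is automatic, and the curve $\g(t) = f_t^*(\r)$ automatically lies in $D$.

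Next I would translate the definitions: by construction $L^i\bigl(f^*(\r)\bigr) = \Len_\r(\G^i)$ for each $i$, so $\cL_{\min}\bigl(\g(t)\bigr)$ is the length of a minimal generalized network in the family $\cN$ at time $t$, and the index set $I_{\min}(t) = I_{\min}\bigl(\cL,\g(t)\bigr)$ is exactly the set of indices $i$ for which $\G^i$ (at time $t$) is a minimal generalized network in $\cN$, i.e.\ it labels precisely the types of minimal generalized networks in $\cN$ at the instant $t$. The analyticity of $\g$ at $t_0$ is the remaining hypothesis of Theorem~\ref{thm:main}, which is given.

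Applying Theorem~\ref{thm:main} then yields both items: part~(1) gives a neighborhood $U$ of $t_0$ in which $I_{\min}(t)$ is constant on $U\cap\{t>t_0\}$ and on $U\cap\{t<t_0\}$, with both constant values contained in $I_{\min}(t_0)$; and part~(2) gives that if $I_{\min}(t) = I_{\min}(t_0)$ for some $t\in U\sm\{t_0\}$, then $I_{\min}$ is constant on the whole of $U$. Unwinding the dictionary above, these are precisely statements (1) and (2) of the corollary. There is essentially no obstacle to overcome: the only thing to verify carefully is that the family $\cN(\cG)$ is indexed in a way compatible with the index set of $\cL$ (one network type per functional $L^i$), which is built into the definition of $\cN(\cG)$ given just before the corollary.
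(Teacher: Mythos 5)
Your proposal is correct and follows exactly the paper's intended route: the corollary is stated as an immediate specialization of Theorem~\ref{thm:main} to the family $\cL=\{L_{E^1},\ldots,L_{E^p}\}$, using that each $L_{E^i}$ is a sum of coordinate functions on $\R^m$ and hence analytic on all of $\cD^m$, with $I_{\min}(t)$ indexing the types of minimal generalized networks. Nothing is missing.
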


For example, if $\cG$ is the family of all spanning trees on the set $V$, then $\cL_{\min}$ is called the \emph{length of minimal spanning tree}, and each minimal generalized network in this family is referred as a  \emph{minimal spanning tree}.

\begin{cor}\label{cor:general_mst}
Let $(X,\r)$ be an arbitrary semi-metric space, and $f_t$, $t\in[a,b]$, be a deformation of some mapping $f\in\cM(V,X)$. Assume that the corresponding curve $\g(t)=f_t^*(\r)$ in $\cD^m$ is analytic at the point  $t_0$. Then
\begin{enumerate}
\item There exists a neighborhood $U\ss[a,b]$ of the point $t_0\in[a,b]$ such that the types of minimal spanning trees are the same for all $t\in U\cap\{t>t_0\}$ \(for all  $t\in U\cap\{t<t_0\}$, respectively\/\), and they are contained in the set of types of minimal spanning trees for $t=t_0$\/\rom;
\item If also the types of minimal spanning trees are the same for some $t\in U\sm\{t_0\}$ and $t_0$, then these types are the same for all $t\in U$.
\end{enumerate}
\end{cor}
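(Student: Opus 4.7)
The plan is to derive this statement as an immediate specialization of Corollary~\ref{cor:general_nets_param}. Since $V$ is finite, there are only finitely many spanning trees on $V$; enumerate them as $T^1,\ldots,T^p$, let $E^i\ss V^{(2)}$ be the edge set of $T^i$, and form the family of graphs $\cG=\bigl\{(V,E^1),\ldots,(V,E^p)\bigr\}$. Each length functional $L^i=L_{E^i}$ is, by the construction given just before Corollary~\ref{cor:general_nets_param}, a linear polynomial in the Cartesian coordinates $r_{ij}$ on $\R^m$, hence analytic on all of $\R^m$ and \emph{a fortiori} at the point $\g(t_0)$.

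Next I would verify that, for this particular $\cG$ and for the mapping $f\in\cM(V,X)$ given in the hypothesis, the notion of minimal generalized network in $\cN(\cG)$ coincides with the classical notion of minimal spanning tree. Indeed, by definition $\Len_\r(\G^i) = L_{E^i}\bigl(f^*(\r)\bigr) = \sum_{v_iv_j\in E^i}\r\bigl(f(v_i),f(v_j)\bigr)$, which is precisely the weight of the spanning tree $T^i$ in the complete weighted graph on $V$ induced by $f^*(\r)$. Consequently the set $I_{\min}\bigl(\cL,\g(t)\bigr)$ is exactly the set of combinatorial types of spanning trees that realize the minimum weight for the semi-metric $f_t^*(\r)$, so ``types of minimal generalized networks in $\cN$'' translates verbatim into ``types of minimal spanning trees''.

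With these identifications in place, I would apply Corollary~\ref{cor:general_nets_param} to the family $\cG$ above and to the given deformation $f_t$. The analyticity hypothesis on $\g(t)=f_t^*(\r)$ at $t_0$ is exactly the one assumed here, and the two conclusions of that corollary specialize immediately to items (1) and (2) of the statement. There is no real obstacle: the only content of the argument is the bookkeeping identification of the finite family of spanning-tree functionals with the abstract family $\cL$ of Theorem~\ref{thm:main}, together with the observation that the linear functionals $L_{E^i}$ are globally analytic on $\R^m$ so that the analyticity hypothesis of the Main Theorem reduces to analyticity of the curve $\g$ alone.
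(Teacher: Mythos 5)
Your proposal is correct and matches the paper's own route exactly: the paper defines minimal spanning trees precisely as the minimal generalized networks in the family $\cG$ of all spanning trees on $V$ (whose length functionals $L_{E^i}$ are linear, hence analytic), so Corollary~\ref{cor:general_mst} is the immediate specialization of Corollary~\ref{cor:general_nets_param} that you describe.
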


\section{Networks in Euclidean Space}
\markright{\thesection.~Networks in Euclidean Space}
In this Section the ambient space is the space $\R^k$ endowed with the Euclidean distance $\rho_2$. Let  $V=\{v_1,\ldots,v_n\}$ be a finite set, and $f\:V\to\R^k$ be an embedding. Put $W=f(V)\subset\R^k$. If  $f(v_i)=w_i=(w_i^1,\ldots,w_i^k)$, $i=1,\ldots,n$, then
$$
f^*(\r_2)(v_i,v_j)=\r_2\bigl(f(v_i),f(v_j)\bigr)=\r_2(w_i,w_j)=\sqrt{\sum_\a(w_i^\a-w_j^\a)^2}.
$$
Each deformation $f_t$, $t\in[a,b]$, of the embedding $f$ generates the one-parametric family of sets $W_t=f_t(V)\subset\R^k$ which is defined by the set of curves $w_i(t)$, $i=1,\ldots,n$. In coordinates  the curve $\g(t)=f^*_t(\r_2)$ in $\cD(V)$ has the following form:
$$
r_{ij}(t)=f^*_t(\r_2)(v_i,v_j)=\r_2\bigl(w_i(t),w_j(t)\bigr)=\sqrt{\sum_\a\bigl(w_i^\a(t)-w_j^\a(t)\bigr)^2}.
$$

\begin{ass}\label{ass:Eucl_case}
Under the above notations, if all the curves $w_i(t)$ are analytic at $t_0$, and all the points $w_i(t_0)$, $i=1,\ldots,n$, are pairwise distinct, then the curve $\g(t)$ is analytic at the point $t_0\in[a,b]$.
\end{ass}

\subsection{Minimal Networks without Additional Vertices}
The next two results follows directly from Corollaries~\ref{cor:general_nets_param}, and~\ref{cor:Euc_case_mst}, and Assertion~\ref{ass:Eucl_case}.

\begin{cor}\label{cor:Euc_case_param_nets}
Let $\cN=\{\G^i\}_{i=1}^p$ be a family of generalized networks generated by a family $\bigl\{G^i=(V,E^i)\bigr\}$ of graphs and by a fixed embedding $f\:V\to\R^k$. Put $W=f(V)=\{w_1,\ldots,w_n\}$, and let $W_t$, $t\in[a,b]$, be a one-parametric deformation of the set $W$ given by a set of curves $w_i(t)$, $t\in[a,b]$. Assume that all the curves  $w_i(t)$ are analytic at the point $t_0\in[a,b]$ and that the points $w_i(t_0)$, $i=1,\ldots,n$, are pairwise distinct. Then
\begin{enumerate}
\item There exists a neighborhood $U\ss[a,b]$ of the point $t_0$ such that the types of minimal generalized networks in the family $\cN$ connecting the sets  $W_t$ are the same for all $t\in U\cap\{t>t_0\}$ \(for all  $t\in U\cap\{t<t_0\}$, respectively\/\) and they are contained among the types of the minimal generalized networks in the family $\cN$ connecting the set $W_{t_0}$\/\rom;
\item If also the sets of types of minimal generalized networks in the family $\cN$ connecting the sets $W_t$ and $W_{t_0}$ coincides withe each other for some $t\in U\sm\{t_0\}$, then these sets are the same for all  $t\in U$.
\end{enumerate}
\end{cor}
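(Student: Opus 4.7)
The plan is to reduce the statement directly to Corollary~\ref{cor:general_nets_param} applied in the semi-metric space $(\R^k,\r_2)$, using Assertion~\ref{ass:Eucl_case} as the bridge that upgrades analyticity of the vertex curves to analyticity of the induced curve in $\cD^m$. Indeed, the data given to us are precisely the hypotheses of Assertion~\ref{ass:Eucl_case}: each $w_i(t)$ is analytic at $t_0$, and the points $w_i(t_0)$ are pairwise distinct. So Assertion~\ref{ass:Eucl_case} guarantees that $\g(t)=f_t^*(\r_2)$ is analytic at $t_0$.

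Once analyticity of $\g$ at $t_0$ is established, Corollary~\ref{cor:general_nets_param} applies verbatim to the family of graphs $\{G^i=(V,E^i)\}$, the embedding $f$, and the deformation $f_t$: its conclusion produces a neighborhood $U\ss[a,b]$ of $t_0$ on which the indices of minimal generalized networks are constant separately for $t>t_0$ and for $t<t_0$ and are contained in the set of indices realized at $t=t_0$. Translating ``minimal generalized network in $\cN$'' back to ``minimal generalized network connecting $W_t$'' via $W_t=f_t(V)$, this is precisely item~(1). Item~(2) is the corresponding translation of the second conclusion of Corollary~\ref{cor:general_nets_param}, which asserts that coincidence of the sets of minima at some $t\in U\sm\{t_0\}$ with those at $t_0$ forces coincidence throughout $U$.

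The only point that merits comment is the role of the pairwise distinctness assumption. Coordinatewise, $r_{ij}(t)=\sqrt{\sum_\a(w_i^\a(t)-w_j^\a(t))^2}$ is an analytic function of $t$ wherever the radicand is strictly positive, which is ensured at $t_0$ by $w_i(t_0)\ne w_j(t_0)$; analyticity at $t_0$ then guarantees non-vanishing of the radicand and analyticity of $r_{ij}$ throughout a neighborhood of $t_0$, so that $\g$ lies in $\cD^m$ (the common domain $D$ of all the $L_{E^i}$) and satisfies the analyticity hypothesis of Theorem~\ref{thm:main}. There is no genuine obstacle in the argument: the substantive work is Theorem~\ref{thm:main}, and the present corollary is simply its specialization to the Euclidean network setting via Assertion~\ref{ass:Eucl_case}.
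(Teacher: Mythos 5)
Your proof is correct and follows exactly the paper's route: the paper states that this corollary follows directly from Corollary~\ref{cor:general_nets_param} together with Assertion~\ref{ass:Eucl_case}, which is precisely your reduction (Assertion~\ref{ass:Eucl_case} supplies analyticity of $\g(t)=f_t^*(\r_2)$ at $t_0$, and Corollary~\ref{cor:general_nets_param} then gives both conclusions). Your additional remark on the role of the pairwise-distinctness hypothesis is a correct gloss on why Assertion~\ref{ass:Eucl_case} needs it.
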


\begin{cor}\label{cor:Euc_case_mst}
Let $W_t$, $t\in[a,b]$, be a one-parametric deformation of a finite subset $W=\{w_1,\ldots,w_n\}\subset\R^k$, oven by a set of curves $w_i(t)$, $t\in[a,b]$. Assume that all the curves $w_i(t)$ are analytic at the point $t_0\in[a,b]$, and that the points $w_i(t_0)$, $i=1,\ldots,n$, are pairwise distinct. Then
\begin{enumerate}
\item There exists a neighborhood $U\ss[a,b]$ of the point $t_0$ such that the types of minimal spanning trees spanning the sets $W_t$ are the same for all $t\in U\cap\{t>t_0\}$ \(for all $t\in U\cap\{t<t_0\}$, respectively\/\) and they are contained among the types of minimal spanning trees connecting $W_{t_0}$\/\rom;
\item If also the types of minimal spanning trees spanning $W_t$ and $W_{t_0}$ are the same for some $t\in U\sm\{t_0\}$, then these sets are the same for all  $t\in U$.
\end{enumerate}
\end{cor}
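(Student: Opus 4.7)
\medskip

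\noindent\textbf{Proof proposal.}
The plan is to show that this corollary is a direct specialization of the general statement in Corollary~\ref{cor:general_mst} to the Euclidean case. Once the analyticity hypothesis on the curve $\gamma(t)$ in $\cD^m$ is verified, nothing else remains to do: the family of all spanning trees on $V$ already falls inside the scope of Corollary~\ref{cor:general_mst}.

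First I would reformulate the given data in the language of Corollary~\ref{cor:general_mst}. Setting $X=\R^k$ and $\r=\r_2$, the one-parametric family of sets $W_t=\{w_1(t),\ldots,w_n(t)\}$ is nothing but $W_t=f_t(V)$ for the deformation $f_t\in\cM(V,\R^k)$ defined by $f_t(v_i)=w_i(t)$. The corresponding curve in $\cD^m$ is $\gamma(t)=f_t^*(\r_2)$, whose coordinates are the pairwise distances $r_{ij}(t)=\r_2\bigl(w_i(t),w_j(t)\bigr)$.

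The second (and essentially only) step is to verify that $\gamma$ is analytic at $t_0$, which is precisely the content of Assertion~\ref{ass:Eucl_case}. Here the hypothesis that the points $w_i(t_0)$ are pairwise distinct is exactly what is needed: it ensures that each $r_{ij}(t)^2=\sum_\a(w_i^\a(t)-w_j^\a(t))^2$ is analytic and strictly positive at $t_0$, so the square root $r_{ij}(t)$ is analytic at $t_0$ rather than merely smooth. This is the one point where pairwise distinctness is essential; without it one would only obtain a smooth, possibly non-analytic, curve $\gamma(t)$, and the pathological examples from the Introduction (oscillating trajectories) would not be ruled out.

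Having verified that $\gamma$ is analytic at $t_0$, I would simply invoke Corollary~\ref{cor:general_mst} applied to $(X,\r)=(\R^k,\r_2)$ and to the deformation $f_t$. Part~(1) and part~(2) of that corollary immediately translate into parts~(1) and~(2) of the present statement once one identifies ``types of minimal spanning trees connecting $W_t$'' with ``types of minimal generalized networks in the family of spanning trees on $V$'', as recorded in the discussion preceding Corollary~\ref{cor:general_mst}. I do not expect any genuine obstacle beyond the bookkeeping of translating notations; the substantive analytic content has been packaged in Proposition~\ref{prop:two_analitic_functions} (used through Theorem~\ref{thm:main}) and in Assertion~\ref{ass:Eucl_case}.
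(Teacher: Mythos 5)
Your proposal is correct and takes essentially the same route as the paper: the paper states that Corollary~\ref{cor:Euc_case_mst} follows directly from Corollary~\ref{cor:general_mst} together with Assertion~\ref{ass:Eucl_case}, which is exactly the reduction you carry out. Your explanation of why the pairwise distinctness of the points $w_i(t_0)$ is needed (so that each $r_{ij}(t)$ is the square root of an analytic function that is strictly positive at $t_0$, hence analytic) correctly fills in the content of Assertion~\ref{ass:Eucl_case}, which the paper leaves unproved.
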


\begin{rk}
The above construction, Assertion~\ref{ass:Eucl_case} and Corollaries~\ref{cor:Euc_case_param_nets} and~\ref{cor:Euc_case_mst} can be generalized word-byword to the case of the space $\R^k$ endowed with the metric  $\r_p$, $1<p<\infty$, where
$$
\r_p(w_i,w_j)=\sqrt[p]{\sum_\a(w_i^\a-w_j^\a)^p}, \qquad w_i=(w_i^1,\ldots,w_i^k)\in\R^k.
$$
More general, the same results are valid for a normed space with a norm that is analytic everywhere except zero.
\end{rk}

\subsection{Necessary Information from Graph and Network Theory}
As above, a pair $G=(V,E)$, where $E\ss V^{(2)}$, is referred as a (simple) \emph{graph}. Since we are interested in boundary problems, we always assume that for each graph $G=(V,E)$ under consideration a set of its vertices that are referred as \emph{boundary\/} is chosen; the set of boundary vertices is the \emph{boundary of the graph $G$} which is denoted by $\d G$. Due to specifics of the boundary problem under consideration, we always assume that $\d G$ contains all the vertices of degree $1$ and $2$ of the graph $G$. The remaining non-boundary vertices of the graph $G$ are referred as  \emph{interior\/} or \emph{movable}.

Let $G=(V,E)$ be tree with a boundary $\d G$, and $v\in \d G$ be a boundary vertex of degree $d\ge2$.  Represent the tree $G$ as a union of its subtrees in such a way that the degree of the boundary vertex $v$ in each these subtree is equal to $1$. To do that  consider all the edges $e_i=u_iv$, $i=1,\ldots,d$, of the graph $G$ that are incident to $v$ and through out of the graph $G$ all the edges $e_i$ except some single $e_j$. By $G_j$ we denote the unique connected component of the resulting graph that contains the edge $e_j$. Evidently, $G_j$ is a subtree in $G$, and the degree of the vertex $v$ in it i.e. equal to one. Put $\d G_j=V_j\cap\d G$, where $G_j=(V_j,E_j)$. It is clear that the tree $G$ is the union of its subtrees $G_j$, $j=1,\ldots,d$. We say that the subtrees $G_j$ are obtained from $G$ by \emph{cutting it by the boundary vertex $v$}. If one cuts a tree $G$ consecutively by all its boundary vertices of degree more than one, then the boundaries of the resulting subtrees consist exactly of all their vertices of degree  $1$. These subtrees are referred as  \emph{regular components\/} of the tree $G$.

We call a tree  (with a boundary)  \emph{binary}, if all its vertices have degrees either $1$ or $3$, and its boundary consists exactly of all its vertices of degree $1$. A pair of adjacent edges of a binary tree, each of which is  incident to a bounder vertex is called  \emph{moustaches}. Each binary tree with three or more boundary vertices has moustaches.

Let $B$ be an arbitrary finite set. By $\cBT(B)$ we denote the set of all binary trees with the boundary $B$ considered up to an isomorphism preserving $B$. 

Let $G=(V,E)$ be a tree with a boundary $\d G$, and $E_d$ be an arbitrary family of edges of the tree $G$. By  $G_1,\ldots,G_k$ we denote the components of the forest $(V,E_d)$, by  $V_i$ we denote the vertex set of the tree  $G_i$, and put $W=\{V_i\}$. Then  $W$ is a partition of the set $V$ that is said to be \emph{generated by the family $E_d$}. By $\pi\:V\to W$ we denote the canonical projection, i.e.,  $\pi(v)=V_i$, if and only if $v\in V_i$. For any edge $e\in E\sm E_d$ its vertices belong to distinct sets $V_i$, therefore $\pi(e)\in W^{(2)}$. Since distinct $V_i$ and $V_j$ are connected by at most one edge of the tree $G$, then the mapping $\pi\:E\sm E_d\to W^{(2)}$ is injective. Put $F=\pi(E\sm E_d)$. It is easy to see that $H=(W,F)$ is a tree. Since each $V_i$ that is connected with other $V_j$ by at most two edges contains a vertex from $\d G$, then all the vertices of degree $1$ and $2$ of the tree $H$ belong to $\pi(\d G)$. Thus, the set $\pi(\d G)$ can be chosen as a  \emph{boundary $\d H$ of the tree  $H$}. The resulting tree $H$ with the boundary  $\d H$ is called  the \emph{quotient\/} of the tree $G$ or the result of  \emph{factorization\/} of the tree  $G$ over the family $E_d$ of edges. The quotient is denoted by  $G/E_d$.

\begin{rk}
The factorization operation defined above differs from the standard factorization over a subset, under which the whole subset turns into a single element: the vertex sets of distinct connected components $G_i$ go to  \emph{distinct\/} vertices of the quotient.
\end{rk}

If $E_d$ consists of a single edge $e$, then we say that $H$ is obtained from $G$ by \emph{degenerating\/} or by \emph{contracting of the edge $e$}, and $G$ is obtained from $H$ by \emph{splitting of the vertex $w$}, where $w\ni\pi(e)$. A factorization of a tree $G$ can be represented as a consecutive contracting of its edges; conversely, each tree $G$ can be repaired from its quotient by a consecutive splitting of its vertices.

Generally speaking, splitting of a vertex is defined ambiguously. Namely, the edges incident to the initial vertex  can be distributed between the new vertices in different ways, and if the initial vertex is a boundary one, then the resulting vertices can be differently distributed between boundary and interior vertices. In the latter case we always assume that t least one of the new vertices is referred to the boundary, and that all the new vertices of degree $1$ or $2$ are also boundary ones.

Each tree is a quotient of some binary tree that is ambiguously defined in general.

Let $G=(V,E)$ be an arbitrary tree, $X$ be a set, and $f\:V\to X$ be an arbitrary mapping. Transform the graph  $\G_f$ of the mapping $f$ into a combinatorial graph as follows: chose $\G_f$ as a vertex set, and connect $\bigl(u,f(u)\bigr)$ and $\bigl(v,f(v)\bigr)$ by an edge, if and only if $uv\in E$. As a result, we obtain a tree which is isomorphic to $G$ with the corresponding boundary. This resulting tree is called a  \emph{network in $X$ of the type $G=(V,E)$}.

\begin{agree}
For convenience of networks operating we make the following agreements:
\begin{itemize}
\item Identify the mapping $f$ and its graph $\G_f$ and denote them by the same symbol, say by $\G$;
\item Denote a vertex $\bigl(v,\G(v)\bigr)$ of the network $\G$ by $x_v$ and identify it with $\G(v)$ (compare with the notation $x_n$ for elements of a sequence of reals, which can be defined as a mapping from $\N$ to $\R$), so, even if $\G(u)=\G(v)$, but $u\ne v$, then the corresponding vertices $x_u$ and $x_v$ of the network $\G$ are considered as different ones; 
\item On the other hand, to distinct the sets $\{x_v\}_{v\in V}$ of the vertices of a network $\G$ and the corresponding subset of $X$, we denote by $\im\G$ the latter one (considering $\G$ as a mapping), in particular, $\d\G=\{x_v\}_{v\in\d G}$ and $\im\d\G=\G(\d G)$;
\item Identify edges $e=uv$ of the tree $G$ with the corresponding edges $x_ux_v$  of the network $\G$.
\end{itemize}
In fact, a network $\G$ is obtained from a tree $G$ by ``assignment'' a ``location'' $\G(v)$ in $X$ to each vertex $v\in V$ .
\end{agree}

An edge  $x_ux_v$ of a network $\G$ of a type  $G$ is called  \emph{degenerate}, if $\G(u)=\G(v)$. The corresponding edge of the tree $G$ is called \emph{$\G$-degenerate}. A network without degenerate edges is called  \emph{non-degenerate}.

Let $S$ be a subfamily of the set of degenerate edges of a network $\G$, and $H=G/S=(W,F)$, $W=\{V_i\}$, be the corresponding quotient. Since the mapping $\G$ maps each $V_i$ to a single point, then the mapping $\D\:W\to X$ such that $\D(V_i)=\G(v)$, where $v\in V_i$, is well-defined. The network $\D$ of the type $H$ is called the \emph{quotient of the network $\G$ with respect to the edge set $S$} and is denoted by $\G/S$. If $S$ is chosen to be the set of all degenerate edges of $\G$, then the network $\D=\G/S$ is called the \emph{trace of the network $\G$} and is denoted by $\tau(\G)$. By Definition the trace of an arbitrary network dose not contain degenerate edges. If $\cA$ is a family of networks, then by $\tau(\cA)$ we denote the set of all the traces of the networks from  $\cA$.

Let $\D=\tau(\G)$ be the trace of some network $\G$, and $H$ be the type of the network $\D$. By $\cB(\D)$ we denote the set of all binary trees $T$ that can be obtained from $H$ by splitting of vertices. Notice that the tree $H$ can be obtained from each such binary tree $T$ by factorization over some appropriate set of edges $S_T$. For each  $T\in\cB(\D)$ the network $\G_T$ of the type $T$, such that $\G_T/S_T=\D$ is uniquely defined. The set of degenerate edges of the network $\G_T$ coincides with $S_T$, so $\D=\tau(\G_T)$. The set $\cB(\D)$ is called the  \emph{binary type of the trace $\D$}.

We say that a network $\G$ in $X$ \emph{connects\/} a finite subset $M$ of the set $X$, if $\im\d\G=M$. By  $\cN(X,M)$ we denote the set of all the networks connecting $M$.

Each mapping $\v\:\d G\to X$ is referred as a \emph{boundary\/} one. Let some boundary mapping $\v$ be fixed. We say that a  \emph{network $\G$ connects a set $M\subset X$ by the mapping $\v$}, if $\G(v)=\v(v)$ for all $v\in\d G$, and $\im\d\G=M$. By $[G,\v]$ we denote the set of networks parameterized by a tree $G$ and connecting some set $M$ by a given mapping $\v$.

Now let $(X,\r)$ be a metric space, and $\G$ be some network. As in the case of a generalized network, by the \emph{length of an edge $x_ux_v$ of the network $\G$} we call the number $\r(x_u,x_v)$, i.e., the length between the vertices $x_u$ and $x_v$. The sum of lengths of all the edges of the network $\G$ is called the  \emph{length\/} of this network and is denoted by $\Len_\r(\G)$. Notice that if $\D$ is a quotient of the network  $\G$, then $\Len_\r(\G)=\Len_\r(\D)$.

Let  $M$ be a finite subset of $X$. Put 
$$
\smt(M)=\inf\bigl\{\Len_\r(\G)\mid \G\in\cN(X,M)\bigr\}.
$$
The number $\smt(M)$ is called the \emph{minimum length of networks on $M$}. Each network $\G\in\cN(X,M)$ such that $\Len_\r(\G)=\smt(M)$ is called a  \emph{shortest network}.

A shortest network can have degenerate edges. The trace of a shortest network has no degenerate edges, i.e., is a non-degenerate tree and called a \emph{Steiner minimal tree on $M$}. By $\SMT(M)$ we denote the set of all Steiner minimal trees on $M$.

Let $G$ be a fixed tree, and $\v\:\d G\to X$ be some boundary mapping. Put
$$
\mpn_G(\v)=\inf\bigl\{\Len_\r(\G)\mid\G\in[G,\v]\bigr\}.
$$
The number $\mpn_G(\v)$ is called the  \emph{minimum length of networks of the type $G$ with the boundary $\v$}. Each network  $\G\in[G,\v]$ such that $\Len_\r(\G)=\mpn_G(\v)$ is called a  \emph{minimal parametric network of the type $G$ with the boundary $\v$}. By $\MPN(G,\v)$ we denote the set of minimal parametric networks of a type $G$ with a boundary  $\v$.

The set $\SMT(M)$ of shortest networks for a given boundary $M$ can be empty. The set of all parametric networks of a given type with a fixed boundary an be also empty.

In what follows we need the following Assertions, see, for example, \cite{ITUMN} and~\cite{ITAMS93}, or book~\cite{ITCRC}.

\begin{ass}\label{ass:gen_bin_tree_reduction}
Let $M\ss X$ and $B$ be arbitrary sets consisting of $n$ elements, and $\v\:B\to M$ be some bijection. Then 
\begin{gather*}
\smt(M)=\min\bigl\{\mpn_G(\v)\mid G\in\cBT(B)\bigr\},\\
\SMT(M)=\bigcup_{\{G\in\cBT(B)\mid\mpn_G(\v)=\smt(M)\}}\t\bigl(\MPN(G,\v)\bigr).
\end{gather*}
\end{ass}

\begin{ass}\label{ass:Eucl_exist}
Let $M$ be a finite subset of the space $\R^k$ endowed with Euclidean metric, $G$ be a tree with a boundary $B$, and $\v\:B\to M$ be an arbitrary bijection. Then $\MPN(G,\v)$ is non-empty. If at the same time $G$ is a binary tree and the network $\G\in\MPN(G,\v)$ is non-degenerate, then $\G$ is unique minimal parametric network of the type  $G$ with the boundary $\v$, and the segments corresponding to adjacent edges of the network meet at a common vertex by the angle of $2\pi/3$. 
\end{ass}

\begin{ass}\label{ass:loc_structure}
Let $M$ be a finite subset of the space $\R^k$ endowed with Euclidean metric, and $\G$ be the trace of a shortest tree connecting $M$. Then the segments corresponding to adjacent edges of the network $\G$ meet at a common vertex by angle which is greater than or equal to $2\pi/3$. In particular, the degrees of vertices of the network  $\G$ does not exceed three, and at each vertex of degree three the angles between adjacent edges are equal to  $2\pi/3$.
\end{ass}

Let $\G$ be the trace of a minimal parametric network of of a type $G$, where $G$ is a binary tree from Assertion~\ref{ass:Eucl_exist}. The regular components $\{\G_1,\ldots,\G_m\}$ of the tree $\G$ are binary trees, distinct $\G_i$ and $\G_j$  intersect each other by at most one common boundary vertex. Each network $\G_i$ is a non-degenerate minimal parametric binary tree connecting $M_i$.  If two regular components meet at a boundary vertex $x_v$, then the angle between the segments corresponding to the edges incident with $x_v$ is greater than or equal to $2\pi/3$, and if there are three such components, then the corresponding angles are equal to $2\pi/3$. If the initial tree $\G$ is a shortest, then each its regular component $\G_i$ is a shortest tree connecting the corresponding $M_i$.

\subsection{Non-Degenerate Minimal Parametric Binary Trees and Non-Degenerate Shortest Trees}
Again consider the space $\R^k$ endowed with Euclidean distance $\rho_2$ as an ambient space. Let $B=\{v_1,\ldots,v_n\}$ be a finite set, and $\v\:B\to\R^k$ be an embedding. Put $M=\v(B)\subset\R^k$. Fix some binary tree $G\in\cBT(B)$. Put $G=(V,E)$ and $I=V\setminus B$. As it is known, see Assertion~\ref{ass:Eucl_exist}, there exists a minimal parametric tree $\G\in\MPN(G,\v)$ of the type $G$ with the boundary $\v$. Moreover, if all the edges of the tree $\G$ are non-degenerate, then it is unique, in other words in this case the location of non-boundary vertices of the tree $\G$ is uniquely defined. Therefore, the mapping $\v$ can be uniquely extended to the mapping $\G$ defined at the set $V$, and hence, a mapping $\Y$ arises that maps a set of boundary vertices  $M=(m_1,\ldots,m_n)\in\R^{nk}$ to the corresponding set $Z=\G(I)=(z_1,\ldots,z_{n-2})\in\R^{(n-2)k}$ of movable vertices of the minimal parametric network $\G$ (the mapping $\Y$ depends evidently on the enumeration of the vertices of the tree $G$). 

\begin{ass}\label{ass:int_vertices}
Assume that all edges of a minimal parametric binary tree $\G_0$ with a boundary $M_0$ are non-degenerate. Then the mapping $Z=\Y(M)$ is defined in a neighborhood of the point $M_0$ and is analytic at the point $M_0$.
\end{ass}

\begin{proof}
Since each network $\G\in[G,\v]$ is uniquely defined by the images $\G(s)$ of its movable vertices $s\in I$, i.e\. by the vector $Z=\G(I)=(z_1,\ldots,z_{n-2})\in\R^{(n-2)k}$, then the function $\ell_G(Z,M)$ of the length of the network  $\G$ with the set of interior vertices $Z$ and the set of boundary vertices $M$ is defined. By the assumptions, all the edges of the minimal tree $\G_0$ with the boundary $M_0$ are non-degenerate, therefore the function $\ell_G(Z,M)$ that is equal to the sum of the lengths of the corresponding straight segments is analytic at the point $(Z_0,W_0)$, where $Z_0$ corresponds to the interior vertices of the network $\G_0$ with the boundary $M_0$.  The minimal parametric network $\G_0$ with the boundary $\v$ is the unique extremum of the function $h(Z)=\ell_G(Z,M_0)$. Therefore, the locations of interior vertices $Z$ are uniquely defined by the condition of the partial derivatives $\d\ell_G/\d z_i^j$, $i=1,\ldots,n-2$, $j=1,\ldots,k$, vanishing, where $z_i=(z_i^1,\ldots,z_i^k)\in\R^k$ are the coordinates of the movable vertices.

\begin{lem}\label{lem:second_deriv}
The matrix 
$$
\Bigl(\frac{\d^2\ell_G}{\d z_i^j\d z_p^q}\Bigr)
$$
of the second derivatives is non-degenerate at the point $(Z_0,M_0)$.
\end{lem}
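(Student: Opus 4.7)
The plan is to compute the Hessian of $\ell_G$ explicitly, verify that it is positive semidefinite, and then show that its null space is trivial by a rigidity-type induction using moustaches.

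For every non-degenerate edge $x_ux_v$ of $\G_0$ the length $|x_u-x_v|$ is real-analytic in the endpoint positions, and a direct computation gives that its second differential, evaluated on a pair of displacements $(\delta_u,\delta_v)\in\R^k\oplus\R^k$, equals
$$
\frac{1}{|x_u-x_v|}\bigl\|P^\perp_{uv}(\delta_u-\delta_v)\bigr\|^2,
$$
where $P^\perp_{uv}$ is the orthogonal projection onto the hyperplane perpendicular to the edge direction. Summing over all edges of $G$ and setting $\delta_b=0$ for $b\in B$ (since we differentiate only in the $Z$-variables), the quadratic form of the Hessian becomes
$$
Q(\delta)=\sum_{uv\in E}\frac{1}{|x_u-x_v|}\bigl\|P^\perp_{uv}(\delta_u-\delta_v)\bigr\|^2\ge 0.
$$
Hence the Hessian is positive semidefinite, and it suffices to show that $Q(\delta)=0$ forces $\delta=0$.

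The condition $Q(\delta)=0$ means that $\delta_u-\delta_v$ is parallel to the edge $uv$ for every edge of $G$, together with $\delta|_B=0$. I would prove that this implies $\delta\equiv 0$ by induction on the number $n$ of boundary vertices of the binary tree $G$. In the base case $n=3$ the unique interior vertex $s$ has three boundary neighbours $a_1,a_2,a_3$, and $\delta_s$ must be parallel simultaneously to $sa_1$ and to $sa_2$; since by Assertion~\ref{ass:Eucl_exist} these two edges meet at the angle $2\pi/3$ and so span different lines, $\delta_s=0$.

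For the inductive step I would pick a moustache, i.e.\ an interior vertex $s$ with two boundary neighbours $a,b$ and a single interior neighbour $c$; such a moustache exists in any binary tree with $n\ge 3$ leaves. The base-case argument applied locally at $s$ forces $\delta_s=0$, and then $\delta_c-\delta_s=\delta_c$ must be parallel to the edge $cs$. Next I form the reduced tree $G'$ by removing the leaves $a,b$ and declaring $s$ to be a new boundary vertex of degree one. The tree $G'$ is again binary, has $n-1$ boundary vertices, and inherits the $2\pi/3$ angle condition at every interior vertex (in particular the three directions at $c$ are unchanged); moreover $\delta|_{\d G'}=0$ since $\delta_s=0$. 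Applying the inductive hypothesis to $G'$ gives $\delta=0$ on every interior vertex of $G'$, hence of $G$, completing the rigidity argument and establishing $\det H\ne 0$.

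The main obstacle is the moustache induction: one must check that all three hypotheses — binarity of $G'$, the $2\pi/3$ angle condition at its interior vertices, and the vanishing of $\delta$ on the enlarged boundary $\d G'$ — are preserved by the reduction. Once the lemma is in place, Proposition~\ref{prop:explicit_analitic_fucntion} applied to the critical-point equations $\d\ell_G/\d z_i^j=0$ yields the analyticity of $\Y$ asserted in Assertion~\ref{ass:int_vertices}.
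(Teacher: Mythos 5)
Your proof is correct and follows essentially the same route as the paper: the Hessian is positive semidefinite because each non-degenerate edge contributes a non-negative second derivative of length (Assertion~\ref{ass:deriv1}), and the kernel is then killed using the $2\pi/3$ angle condition at moustaches guaranteed by Assertion~\ref{ass:Eucl_exist}. The one place where you go beyond the paper is the rigidity step: the paper examines a single moustache with common vertex $z$, observes that a null vector cannot be parallel to both moustache edges at $z$, and immediately concludes $\cQ(\xi)\ne0$ for every $\xi\ne0$, which as written only handles null vectors that are nonzero at a moustache vertex and leaves implicit how the vanishing propagates to the remaining interior vertices. Your leaf-pruning induction (force $\delta_s=0$ at a moustache vertex, delete its two boundary legs, promote $s$ to a degree-one boundary vertex with $\delta_s=0$, and recurse on the smaller binary tree, whose angles at the surviving interior vertices are unchanged) supplies exactly that missing propagation, so your write-up is the more complete of the two.
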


\begin{proof}
The function $h(Z)=\ell_G(Z,M_0)$ is the sum of the lengths of non-degenerate straight segments which are the edges of the parametric tree of the type $G$ with the boundary $M_0$ and the interior vertices $Z$. The point $Z_0$ corresponding to the minimal parametric network is the point of a proper local minimum of the function $h$, therefore all the first partial derivatives $\d h/\d z_i^j=\d\ell_G/\d z_i^j$ are equal to zero at this point, and the second differential is a symmetric bilinear form. This form is non-degenerate, if and only if the corresponding quadratic form $\cQ$ is non-degenerate, and the value of the latter one at an arbitrary vector $\xi$ can be calculated as follows:
$$
\cQ(\xi)=\frac{d^2}{d t^2}\Big|_{t=0}h(Z+t\xi).
$$
The second derivative of the length of a straight segment under a linear deformation is calculated in Assertion~\ref{ass:deriv1} which implies that this derivative is non-negative. Therefore, $\cQ(\xi)=0$, if and only if the second derivative of the length vanishes for each edge of the network. Show that the latter is impossible.

Consider an arbitrary mustaches of the binary tree $G$, and let $z$ be the common vertex which the edges $zw$ and $zw'$ of the mustaches meet at. By Assertion~\ref{ass:deriv1}, the second derivative of the length of a straight segment is equal to zero, if and only if the difference of the deformation velocity vectors at its ends is parallel to the segment itself. The deformations under consideration remain fixed the boundary vertices, therefore the deformation velocity vectors are equal to zero at the points $w$ and $w'$. But the deformation velocity vector at the vertex $z$ can not be  parallel to the straight segments $zw$ and $zw'$ simultaneously, because the angle between these segments is equal to $2\pi/3$ in accordance with Assertion~\ref{ass:Eucl_exist}. Hence, the second derivatives of the lengths of the straight segments $zw$ and $zw'$ can not vanish simultaneously. Thus, $\cQ(\xi)\ne 0$ for any non-zero vector $\xi$. Lemma is proved.
\end{proof}

Lemma~\ref{lem:second_deriv} implies that the equations system $\d\ell_G/\d z_i^j=0$, $i=1,\ldots,n-2$, $j=1,\ldots,k$, satisfies the conditions of Proposition~\ref{prop:explicit_analitic_fucntion}, therefore it is solvable in a neighborhood of the point $M_0$, namely, there exist uniquely defined analytic functions $z_i=z_i(m_1,\ldots,m_n)$, $i=1,\ldots, n-2$, determining the location of the interior vertices of the minimal parametric network $\G$ of the type $G$ with the boundary $M$.  Assertion is proved.
\end{proof}

\begin{cor}\label{cor:lmn_analit}
Assume that all the edges of a minimal parametric binary tree $\G$ with a boundary $M$ are non-degenerate. Then each analytic deformation  $M_t$ of the boundary set generates an analytic deformation $\G_t(V)$ of the whole vertex set of the minimal parametric network $\G_t$ of the type $G=(V,E)$ with the boundary  $M_t$.
\end{cor}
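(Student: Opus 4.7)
The plan is to reduce the statement to a direct composition of analytic maps, using Assertion~\ref{ass:int_vertices} as the main ingredient. Split the vertex set $V=B\sqcup I$ into its boundary part $B$ and interior part $I$, and treat the two parts separately. For the boundary vertices $v\in B$, the values $\G_t(v)$ are given tautologically by the deformation $M_t$ (the components of the analytic curve in $(\R^k)^n$ describing $M_t$), so the restriction $\G_t|_B$ is analytic at $t_0$ by hypothesis.

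For the interior vertices, I would invoke Assertion~\ref{ass:int_vertices} with $\G_0=\G$ and $M_0=M$: since by hypothesis all edges of $\G$ are non-degenerate, the mapping $Z=\Y(M)$ producing the interior vertex locations of the unique non-degenerate minimal parametric tree of type $G$ is defined and analytic in some open neighborhood $\cO$ of the configuration $M_0$ in $\R^{nk}$. Because the curve $t\mapsto M_t$ is continuous and passes through $M_0$ at $t_0$, there is a neighborhood of $t_0$ in $[a,b]$ on which $M_t\in\cO$. On this neighborhood the composition $t\mapsto \Y(M_t)$ is the composition of two analytic maps, and hence is analytic at $t_0$.

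It remains to identify this composition with $\G_t|_I$. For $t$ close to $t_0$, continuity of $\Y$ at $M_0$ guarantees that the parametric tree obtained by placing the interior vertices at $\Y(M_t)$ is still non-degenerate, so by the uniqueness part of Assertion~\ref{ass:Eucl_exist} it coincides with the unique minimal parametric network $\G_t$ of type $G$ with boundary given by $M_t$; hence $\G_t|_I=\Y(M_t)$ on this neighborhood. Combining with the trivial analyticity on $B$ yields analyticity of the full vertex-set map $t\mapsto \G_t(V)$ at $t_0$.

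The only genuinely nontrivial step is Assertion~\ref{ass:int_vertices} itself, which has already been established via the analytic implicit function theorem (Proposition~\ref{prop:explicit_analitic_fucntion}) applied to the stationarity equations for $\ell_G$; the present corollary is then just the observation that the composition of two analytic maps is analytic, together with the mild continuity argument ensuring that $\Y$ may be legitimately applied along the whole curve $M_t$ near $t_0$.
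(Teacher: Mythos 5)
Your proof is correct and takes essentially the same route as the paper, which offers no separate argument for this corollary but presents it as an immediate consequence of Assertion~\ref{ass:int_vertices}: the boundary vertices move analytically by hypothesis, and the interior vertices are the composition of the analytic map $\Y$ with the analytic curve $t\mapsto M_t$. Your third paragraph is in fact already subsumed by the statement of Assertion~\ref{ass:int_vertices}, which asserts that $\Y(M)$ gives the interior vertices of the minimal parametric network for every $M$ in a neighborhood of $M_0$ (note only that deducing this from the uniqueness clause of Assertion~\ref{ass:Eucl_exist} alone is slightly circular --- one also needs that the critical point produced by the implicit function theorem is the minimizer, which follows from convexity of $\ell_G$ in $Z$).
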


\begin{cor}\label{cor:smt}
Assume that all the shortest trees connecting a boundary set  $M=\{m_1,\ldots,m_n\}\subset\R^k$ are non-degenerate. Let a one-parametric deformation $M_t$ of the boundary set be given, such that each curve $m_i(t)$ is analytic at the point $t=t_0$, $M_{t_0}=M$. Then
 \begin{enumerate}
\item There exists a neighborhood $U\ss[a,b]$ of the point $t_0$, such that for all $t\in U\cap\{t>t_0\}$ \(for all $t\in U\cap\{t<t_0\}$, respectively\/\) the types of the shortest trees connecting the sets $M_t$ are the same, and these types are contained  among the types of the shortest trees connecting the initial set $M_{t_0}$\/\rom;
\item If also the sets of types of the shortest networks connecting $M_t$ and $M_{t_0}$ are the same for some $t\in U\sm\{t_0\}$, then these sets coincide with each other for all $t\in U$.
\end{enumerate}
\end{cor}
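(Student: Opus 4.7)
The plan is to use Assertion~\ref{ass:gen_bin_tree_reduction} to reduce the statement on shortest trees to an assertion on the finitely many parametric length functions $\mpn_G(\v_t)$ as $G$ ranges over $\cBT(B)$, and then to apply the analytic dichotomy of Proposition~\ref{prop:two_analitic_functions} to those $G$ that achieve the minimum at $t_0$. First I would fix an enumeration $B=\{v_1,\ldots,v_n\}$ and set $\v_t(v_i)=m_i(t)$, then split $\cBT(B)$ into the subset $\cBT_0=\bigl\{G:\mpn_G(\v_{t_0})=\smt(M_{t_0})\bigr\}$ and its complement $\cBT_1$. By the hypothesis that every shortest tree on $M_{t_0}$ is non-degenerate, for each $G\in\cBT_0$ the corresponding MPN is non-degenerate and, by Assertion~\ref{ass:Eucl_exist}, unique; in particular the $m_i(t_0)$ are pairwise distinct, so Assertion~\ref{ass:Eucl_case} applies to the curve $\g(t)=f_t^*(\r_2)$.

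Next I would eliminate the non-minimizing binary trees. For $G\in\cBT_1$ one has $\mpn_G(\v_{t_0})>\smt(M_{t_0})$; since $\mpn_G$ depends continuously on $\v$ (the minimizer always exists by Assertion~\ref{ass:Eucl_exist}), there is a neighborhood of $t_0$ on which each such $G$ is strictly longer than some representative of $\cBT_0$, so in this neighborhood the shortest trees arise only from $\cBT_0$. For $G\in\cBT_0$, Corollary~\ref{cor:lmn_analit} (via Assertion~\ref{ass:int_vertices}) states that the interior vertex positions of the MPN of type $G$ depend analytically on $M$ near $M_{t_0}$; since all edges are non-degenerate at $t_0$ they remain non-degenerate on a (possibly smaller) neighborhood, so $t\mapsto\mpn_G(\v_t)$ is analytic at $t_0$ as a sum of Euclidean norms of non-vanishing vectors composed with analytic functions of $t$. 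All these functions take the common value $\smt(M_{t_0})$ at $t=t_0$.

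Now I would apply Proposition~\ref{prop:two_analitic_functions} to each pair $\bigl(\mpn_{G_1},\mpn_{G_2}\bigr)$ with $G_1,G_2\in\cBT_0$. Intersecting over the finitely many pairs yields a neighborhood $U$ on which each difference $\mpn_{G_1}(t)-\mpn_{G_2}(t)$ either vanishes identically or has constant nonzero sign on each of $U\cap\{t>t_0\}$ and $U\cap\{t<t_0\}$. The argmin subset of $\cBT_0$ is therefore constant on each side of $t_0$ and is contained in $\cBT_0$; by the non-degeneracy propagation from the previous paragraph it coincides with the set of types of shortest trees on $M_t$, which is Part~(1). For Part~(2), if at some $t_1\in U\setminus\{t_0\}$ the set of types agrees with that at $t_0$, then every $\mpn_G$ with $G\in\cBT_0$ coincides with every other at both $t_0$ and $t_1$; Proposition~\ref{prop:two_analitic_functions} then forces each such pair to coincide throughout a neighborhood of $t_0$, and hence $\cBT_0$ is the full argmin on all of $U$.

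The main subtle point is the joint handling of the non-analytic functions $\mpn_G$ for $G\in\cBT_1$ (which could a priori cross the minimum) together with the preservation of non-degeneracy for $G\in\cBT_0$ (which guarantees that the argmin really corresponds to types of shortest trees, not just to equal values of the parametric length). Both are controlled by continuity, combined with the finiteness of $\cBT(B)$, so $U$ need only be shrunk finitely many times before the analytic machinery of Proposition~\ref{prop:two_analitic_functions} applies to the family of functions that genuinely govern the minimum.
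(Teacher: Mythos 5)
Your argument is correct and follows essentially the same route the paper intends: reduce via Assertion~\ref{ass:gen_bin_tree_reduction} to the finite family of functions $\mpn_G(\v_t)$, use the non-degeneracy hypothesis together with Assertion~\ref{ass:int_vertices} and Corollary~\ref{cor:lmn_analit} to get analyticity of the minimizing ones at $t_0$, discard the non-minimizing binary trees by continuity, and conclude with the dichotomy of Proposition~\ref{prop:two_analitic_functions} (i.e.\ Theorem~\ref{thm:main}). Your explicit treatment of the trees in $\cBT_1$ and of the persistence of non-degeneracy is exactly the bookkeeping the paper leaves implicit.
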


\subsection{Stable Minimal Parametric Binary Trees and Stable Shortest Trees}
Fix the notations from the previous Subsection, and consider an arbitrary minimal parametric binary tree $\G$ of a type $G$ connecting a finite set $M\ss\R^k$. Consider the trace $\D$ of the network $\G$. Its regular components satisfy the conditions of Assertion~\ref{ass:int_vertices}, therefore, the interior vertices of each such component depend analytically on its boundary vertices. And if the regular components meet each other at the vertices of degree two only, then $\cB(\D)=\{G\}$, and so the following result holds.

\begin{cor}\label{cor:ar_lmn_analit_deg2}
Assume that the regular components of a shortest tree $\G$ with a boundary $M_{t_0}=M$ meet each other at boundary vertices of degree two only, and all the angles between the corresponding segments are strictly greater than  $2\pi/3$. Then for any sufficiently small analytic deformation $M_t$ of the boundary the decomposition of the minimal parametric network $\G_t$ of the type $G=(V,E)$ with the boundary $M_t$ into regular components is the same for all $t$ and $\G_t(V)$ is an analytic deformation of the whole vertex set of the network $\G_t$.
\end{cor}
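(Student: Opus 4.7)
The plan is to construct an explicit candidate $\tilde\G_t$ of type $G$ sharing the degenerate-edge pattern of the minimal parametric network $\G_{t_0}$ at the initial moment, verify via convex analysis that $\tilde\G_t$ is itself a minimal parametric network of type $G$ with boundary $M_t$, and then argue uniqueness so that $\G_t=\tilde\G_t$; analyticity of $\G_t(V)$ will follow from the construction. Let $S\subset E$ denote the set of $\G_{t_0}$-degenerate edges, and let $\D=\tau(\G_{t_0})=\G$ have type $H=G/S$. A short degree count in the binary tree $G$, combined with the distinctness of the points of $M$ and the convention $\d H=\pi(\d G)$, forces each junction class (i.e.\ each degree-two vertex of $H$) to consist of a single boundary vertex $v\in B$ together with its unique $G$-neighbor $u$, joined by a degenerate edge of $\G_{t_0}$, while every other interior vertex of $G$ forms its own singleton class corresponding to a truly interior degree-three vertex of $\D$. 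In particular every junction position in $\D$ is prescribed by $M$, and each regular component $\D_i$ of $\D$ is a non-degenerate binary tree whose degree-one vertices all come from $M$.

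Next I would build $\tilde\G_t$: since the boundary of each $\D_i$ moves analytically under $M_t$, Assertion~\ref{ass:int_vertices} yields an analytic deformation of the interior vertices of $\D_i$; lifting back to $G$ by placing every junction class $\{v,u\}$ at the common point $M_t(v)$ produces an analytic family $\tilde\G_t$ of type $G$ whose degenerate edges are exactly $S$.

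Then I would check minimality and uniqueness. Convexity of $\ell_G$ (a sum of Euclidean norms in the interior vertex positions) reduces minimality of $\tilde\G_t$ to the condition $0\in\d\ell_G(\tilde\G_t)$. The Steiner angle condition $2\pi/3$ holds at every truly interior vertex because each $\D_i$ is a minimal parametric network of its type. At a junction vertex $u=v$ with the other two neighbors of $u$ at $a_1,a_2$, the relevant terms of $\ell_G$ are $\|u-v\|+\|u-a_1\|+\|u-a_2\|$ and the subdifferential at $u=v$ contains $0$ iff $\|\hat{e}_1+\hat{e}_2\|\le 1$ with $\hat{e}_i=(v-a_i)/\|v-a_i\|$, equivalently iff the angle between the segments $va_1,va_2$ at $v$ is at least $2\pi/3$. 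The strict hypothesis $>2\pi/3$ persists under small deformation, placing $0$ in the \emph{interior} of the subdifferential at $\tilde\G_t$; combined with the non-degenerate Hessian on directions preserving the junction coincidences (Lemma~\ref{lem:second_deriv} applied to each regular component), this makes $\tilde\G_t$ a strict minimum, hence the unique minimum of $\ell_G$. Therefore $\G_t=\tilde\G_t$, giving both conclusions of the corollary.

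The hard part will be the subdifferential analysis at the junction vertices together with the uniqueness check; the strict $>2\pi/3$ hypothesis is precisely what places $0$ in the interior of the subdifferential there and rules out any competing minimum with a different pattern of degenerate edges.
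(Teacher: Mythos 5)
Your proposal is correct and follows essentially the same route as the paper: decompose the trace into regular components, apply Assertion~\ref{ass:int_vertices} to each component to obtain analytic dependence of its interior vertices on its boundary, and use the strict angle condition $>2\pi/3$ at the degree-two junction vertices to conclude that the pattern of degenerate edges, and hence the decomposition, persists for all sufficiently small $t$. Your subdifferential computation at the junctions (the condition $\|\hat e_1+\hat e_2\|\le 1$ versus angle $\ge 2\pi/3$) merely makes explicit the minimality-and-uniqueness verification that the paper leaves implicit in the paragraph preceding the corollary, so it is the same argument in substance.
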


A shortest tree $\G$ is said to be {\em stable}, if its regular components meet each other at the boundary vertices of degree two only and all the angles between the corresponding segments are strictly greater than  $2\pi/3$. Corollary~\ref{cor:ar_lmn_analit_deg2} and Theorem~\ref{thm:main} imply the following result.

\begin{cor}\label{cor:smt_gen}
Let $M_t$ be a one-parametric deformation of a boundary set $M=\{m_1,\ldots,m_n\}\subset\R^k$, such that each curve $m_i(t)$ is analytic at the point $t=t_0$, $M_{t_0}=M$, and let all the shortest trees connecting the set  $M$ be stable. Then
 \begin{enumerate}
\item There exists a neighborhood $U\ss[a,b]$ of the point $t_0$, such that the binary types of the traces of the shortest networks connecting the sets $M_t$ are the same for all $t\in U\cap\{t>t_0\}$ \(for all $t\in U\cap\{t<t_0\}$, respectively\/\), and these types are contained among the binary types of the traces of the shortest trees for $M_{t_0}$\/\rom;
\item If also the binary types of the traces of the shortest trees connecting $M_t$ and $M_{t_0}$ are the same for some $t\in U\sm\{t_0\}$, then they are the same for all $t\in U$.
\end{enumerate}
The length of shortest tree connecting $M_t$ is analytic on $t$.
\end{cor}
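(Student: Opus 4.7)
The plan is to mimic the proof of Theorem~\ref{thm:main} applied to the family of functions $L^T(t):=\mpn_T(\v_t)$ indexed by $T\in\cBT(B)$, where $\v_t\:B\to\R^k$ is the boundary mapping $v_i\mapsto m_i(t)$. By Assertion~\ref{ass:gen_bin_tree_reduction}, one has $\smt(M_t)=\min_{T\in\cBT(B)}L^T(t)$, and the set $\mathcal{I}(t):=\{T\in\cBT(B)\: L^T(t)=\smt(M_t)\}$ equals $\bigcup_{\D}\cB(\D)$ as $\D$ runs over the Steiner minimal trees on $M_t$; thus $\mathcal{I}(t)$ encodes precisely the binary types of traces mentioned in the statement. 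Once every relevant $L^T$ is shown to be analytic at $t_0$, conclusions (1) and (2) follow by pairwise application of Proposition~\ref{prop:two_analitic_functions} to the differences $L^T-L^{T'}$, exactly as in the proof of Theorem~\ref{thm:main}.

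Any $T\notin\mathcal{I}(t_0)$ satisfies $L^T(t_0)>\smt(M_{t_0})$. Since $\mpn_T$ is Lipschitz in the boundary positions and $\smt$ is continuous in $t$, such a $T$ stays outside $\mathcal{I}(t)$ on a neighborhood of $t_0$ and can be discarded. For $T\in\mathcal{I}(t_0)$, every network in $\MPN(T,\v_{t_0})$ has length $\smt(M_{t_0})$, so its trace $\D_T$ is a Steiner minimal tree on $M_{t_0}$, hence stable by hypothesis. Let $H_T$ be the combinatorial type of $\D_T$ and $S_T\ss E(T)$ the distinguished set of degenerate edges with $T/S_T=H_T$. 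Applying Corollary~\ref{cor:ar_lmn_analit_deg2} to $\D_T$ produces an analytic family $\D_t$ of minimal parametric networks of type $H_T$ with boundary $\v_t$, so that $\mpn_{H_T}(\v_t)=\Len(\D_t)$ is analytic at $t_0$.

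The main step is to establish $L^T(t)=\mpn_{H_T}(\v_t)$ on a neighborhood of $t_0$. The inequality $L^T(t)\le\mpn_{H_T}(\v_t)$ is immediate, since re-inserting the edges of $S_T$ as degenerate edges into $\D_t$ yields a type-$T$ network of length $\Len(\D_t)$. The reverse inequality requires that the type-$T$ minimizer still keeps its $S_T$-edges degenerate for $t$ near $t_0$. This is where stability enters: at each split vertex $v'$ of $T$ corresponding to a degree-two boundary vertex $v$ of $\D_T$ with neighbors $a,b$, the first-variation formula of Assertion~\ref{ass:deriv1} shows that moving $v'$ away from $v$ by $\e\,d$ changes the sum of the three incident edge lengths at first order by $\e(1-\cos\theta_a-\cos\theta_b)$. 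The maximum of $\cos\theta_a+\cos\theta_b$ over unit directions $d$ equals $2\cos(\alpha/2)$, where $\alpha>2\pi/3$ is the stability angle at $v$, so this maximum is strictly less than $1$. Thus $v'=v$ is a strict local minimum of the type-$T$ length, an open condition that persists under small analytic perturbations of the boundary. Combined with Corollary~\ref{cor:ar_lmn_analit_deg2}, which deforms every regular component analytically, this shows that the type-$T$ minimizer retains all its $S_T$-edges degenerate; hence $L^T(t)=\mpn_{H_T}(\v_t)$ for $t$ near $t_0$ and $L^T$ is analytic at $t_0$. This is the principal obstacle.

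With analyticity of every relevant $L^T$ in hand, pairwise application of Proposition~\ref{prop:two_analitic_functions} delivers conclusions (1) and (2) exactly as in Theorem~\ref{thm:main}. For the final sentence, on $U\cap\{t>t_0\}$ the set $\mathcal{I}(t)$ is a fixed subset $\mathcal{I}_+\subseteq\mathcal{I}(t_0)$, so $\smt(M_t)=L^{T_+}(t)$ for any chosen $T_+\in\mathcal{I}_+$; since $L^{T_+}$ extends to a function analytic on a full neighborhood of $t_0$, $\smt(M_t)$ is analytic on this one-sided neighborhood. The identical reasoning on $U\cap\{t<t_0\}$ yields analyticity on $t$.
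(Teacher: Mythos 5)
Your overall route is exactly the one the paper indicates (its ``proof'' of this corollary is the single sentence that Corollary~\ref{cor:ar_lmn_analit_deg2} and Theorem~\ref{thm:main} imply it): reduce via Assertion~\ref{ass:gen_bin_tree_reduction} to the finite family $\{\mpn_T(\v_t)\}_{T\in\cBT(B)}$, discard the non-optimal types by continuity, prove analyticity at $t_0$ of the optimal ones, and finish with Proposition~\ref{prop:two_analitic_functions}. You also correctly identify the only point that genuinely needs work --- that the type-$T$ minimizer keeps its $S_T$-edges degenerate for $t$ near $t_0$ --- and correctly locate where stability enters, through the first-variation estimate $1-\<d,\tau_a+\tau_b\>\ge 1-2\cos(\alpha/2)>0$ for $\alpha>2\pi/3$.

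The one inference that does not hold as written is ``thus $v'=v$ is a strict local minimum of the type-$T$ length \dots\ this shows that the type-$T$ minimizer retains all its $S_T$-edges degenerate.'' What you actually establish is strict local minimality with respect to moving \emph{one} split vertex at a time, together with (via Corollary~\ref{cor:ar_lmn_analit_deg2}) minimality of each regular component with its own boundary held fixed. Block-wise local minimality does not imply joint minimality, even for convex functions: $f(x,y)=|x-y|-\tfrac12(x+y)$ is convex and has a strict minimum at the origin along each coordinate axis, yet is unbounded below along the diagonal. The repair is standard but must be stated: $\ell_T(Z,M_t)$ is convex in $Z$ (a sum of norms of affine functions), so it suffices to check that the one-sided directional derivative at the glued configuration $Z_t$ is nonnegative for an \emph{arbitrary joint} perturbation $\xi$ of all interior vertices. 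Summing the first variations of Assertion~\ref{ass:deriv1} edge by edge and regrouping by vertices, every nondegenerate Steiner point contributes zero because its three unit edge directions sum to zero, every boundary vertex is fixed, and each split vertex contributes $|\xi_{v'}|-\<\xi_{v'},\tau_a+\tau_b\>\ge|\xi_{v'}|\bigl(1-2\cos(\alpha_t/2)\bigr)\ge0$; hence $Z_t$ is a global minimizer of $\ell_T(\cdot,M_t)$ and $\mpn_T(\v_t)=\Len(\D_t)$ is analytic at $t_0$. With this inserted your argument is complete. Note also that your final step yields analyticity of $\smt(M_t)$ only one-sidedly at $t_0$ (the two sides may be governed by different analytic branches that merely agree at $t_0$); that is all the last sentence of the corollary can be read as asserting.
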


\section{Minimal Fillings of Finite Metric Spaces}
\markright{\thesection.~Minimal Fillings of Finite Metric Spaces}
The problem on minimal fillings of finite metric spaces appeared in~\cite{ITMatSb} as a result of a synthesis of two classical problems, namely, Steiner problem on shortest networks and Gromov problem on minimal fillings of Riemannian manifolds, see~\cite{Gromov}. The details can be found in~\cite{ITMatSb}, and here we include necessary concepts and results only. 

Let $G=(V,E)$ be a graph and $\om\:E\to\R$ be an arbitrary function on its edge set, which is usually referred as a  \emph{weight function}. In this case the pair $(\G,\om)$ is called a \emph{weighted graph}. The value of a weight function on an edge is called the \emph{weight\/} of this edge. For each subgraph $H$ in $G$ its  \emph{weight  $\om(H)$} is defined as the sum of all its edges. Similarly, the weight $\om(\g)$ of each rout $\g$ is defined.  If the graph $G$ is connected and the function $\om$ is non-negative, then a semi-metric $d_\om$ arises on the vertex set $V$ of the graph $G$, namely, the value $d_\om(x,y)$ is equal to the least possible weight of a rout in $G$ connecting $x$ and $y$. 

Let $(X,\r)$ be a semi-metric space, and $G=(V,E)$ be a connected  graph with a boundary $X$ and a non-negative weight function $\om$. Weighted graph  $(G,\om)$ is called a  \emph{filling of the space $(X,\r)$}, if for any points $x$ and $y$ from $X$ the inequality $\r(x,y)\le d_\om(x,y)$ is valid. The value  
$$
\mpf(X,\r,G)=\inf\bigl\{\om(G)\mid \om : \text{$(G,\om)$ is a filling of the space $(X,\r)$}\bigr\}
$$
is called the \emph{weight of minimal parametric filling of the type $G$} of the space $X$, and each weighted graph $(G,\om)$ which the infimum is attained at is called a \emph{minimal parametric filling of the type $G$} of the space  $X$. Further, the value 
$$
\mf(X,\r)=\inf\bigl\{\mpf(X,\r,G)\mid \text{$G$ is a connected graph with the boundary $X$} \bigr\}
$$ 
is called the \emph{weight of minimal filling of the space $X$}, and each weighted graph $(G,\om)$ which the infimum is attained at is called a \emph{minimal filling\/} of the space $X$.

In paper~\cite{ITMatSb} it is shown that for any semi-metric space $(X,\r)$ and any connected graph with the boundary $X$ there exists a minimal parametric filling of the type $G$, and for any semi-metric space $(X,\r)$ there exists a minimal filling. Moreover, among the minimal fillings one can always find a minimal filling, whose type is a binary tree with the boundary $X$. If the space $(X,\r)$ is assumed to be a metric space in addition, then there also exists a minimal fillings, whose type is a tree and whose weight function is strictly  positive.  

Since the number of binary trees with a boundary consisting of a fixed number of points is finite, then the problem of minimal filling of a finite metric space finding can be reduced to a finite enumeration of minimal parametric fillings (but an exponential one), each of which (i.e., the corresponding weight function in fact) can be found by linear programming. But it turns out that the weight of minimal filling of a space $(X,\r)$ can be expressed as a combinatorial formula also, that represents it as a function on the distances between the points from $X$.  

An existence and possible form of such formula were conjectured in~\cite{ITMatSb}. Later on it turns out that to obtain a correct expression one need to generalize the concept of a parametric filling in the case of trees permitting negative weights of edges~\cite{IOST}. Namely, a weighted tree $(T,\om)$ with the boundary $X$ is called a  \emph{generalized filling of a semi-metric space $(X,\r)$}, if for any pair of points $x$ and $y$ from $X$ the inequality $\r(x,y)\le \om(\g_{xy})$ is valid, where $\g_{xy}$ is the unique path in the tree $T$ connecting $x$ and $y$. The value 
$$
\mpf_-(X,\r,T)=\inf\bigl\{\om(T)\mid \om : \text{$(T,\om)$ is a generalized filling of the space $(X,\r)$}\bigr\}
$$
is called the \emph{weight of generalized minimal parametric filling of the type $T$} of the space $X$, and each weighted tree $(T,\om)$ which the infimum is attained at is referred as a \emph{generalized minimal parametric filling of the type $T$} of the space $X$. Further, the value 
$$
\mf_-(X,\r)=\inf\bigl\{\mpf_-(X,\r,T)\mid \text{$T$ is a tree with the boundary $X$} \bigr\}
$$ 
is called the \emph{weight of generalized minimal filling of the space $X$}, and each weighted tree $(T,\om)$ which the infimum is attained at is called a  \emph{generalized minimal filling\/} of the space $X$. 

It is not difficult to construct an example of a metric space $(X,\r)$ and a tree $T$ such that $\mpf_-(X,\r,T)<\mpf(X,\r)$. But as it is shown in paper~\cite{IOST}, $\mf_-(X,\r)=\mf(X,\r)$, therefore, the weight of minimal filling can be calculated as the minimal weight of generalized minimal fillings whose types are binary trees.

The final combinatorial formula for the weight of generalized minimal parametric filling is obtained in~\cite{Eremin} in terms of so-called tours~\cite{ITMatSb}  and multi-tours. Here we list the corresponding definitions.  

Let $S$ be a finite set consisting of $n$ elements. By a  \emph{multi cyclic order of multiplicity $k$ on the set $S$} we call a mapping $\pi\:\Z_{nk}\to S$ such that
\begin{enumerate}
\item $\pi(j)\ne\pi(j+1)$ for any $j\in\Z_{nk}$, and
\item for any element $s\in S$ its pre-image under the mapping $\pi$ consists of $k$ elements exactly.
\end{enumerate}
The value 
$$
p(X,\r,\pi)=\frac1{2k}\sum_{j\in\Z_{nk}}\r\bigl(\pi(j),\pi(j+1)\bigr)
$$
is called the \emph{multi-perimeter of the space $(X,\r)$ with respect to the multi cyclic order $\pi$} 

Let $T=(V,E)$ be a tree with a boundary $M$. For each its edge $e$ the forest $\bigl(V,E\setminus\{e\}\bigr)$ consists of two subtrees $T_1$ and $T_2$. Put $M_i=M\cap T_i$, $i=1,\,2$.  A multi cyclic order on $M$ is called a \emph{multi-tour of the tree $T$}, if there exists $k$ such that for any $e\in E$ and each $M_i$ there exist exactly  $k$ elements  $p\in\Z_{nk}$ such that $\pi(p)\in M_i$, but $\pi(p+1)\notin M_i$. This~$k$ is called the \emph{multiplicity of the multi-tour\/}; a multi-tour of multiplicity  $k$ is also referred as a \emph{$k$-tour}. It is clear that if a multi cyclic order is a multi-tour, then its multiplicity as the one of a multi cyclic order coincides with its multiplicity of a multi-tour. By  $\cO(T)$ we denote the set of all multi-tours of the tree $T$.

Notice that since any pair of vertices in the tree $T$ is connected by the unique path, then each $k$-tour $\pi$ of the tree $T$ defines the set of non-empty paths $\g_j$, $j\in\Z_{nk}$, in the tree $T$ connecting its boundary vertices $\pi(j)$ and $\pi(j+1)$, $j\in\Z_{nk}$. Each edge of the tree $T$ belongs to $2k$ such paths exactly. The union of all these $nk$ paths forms an Euler cycle in the graph obtained from $T$ by changing of each its edge by the family of $2k$ multiple edges, and the multi-tour can be considered as a walk along this Euler cycle by  the consecutive paths $\g_j$.

In paper~\cite{Eremin} the following formula
$$
\mpf_-(X,\r,T) = \max_{\pi\in\cO(T)}p(X,\r,\pi)
$$
is proved, and thus,
$$
\mf(X,\r) = \min\limits_{T}\max\limits_{\pi\in\cO(T)} p(X,\r,\pi),
$$
where the minimum is taken over all binary trees $T$ with the boundary $X$.

An essential defect of those formulas is that the set $\cO(T)$ which the maximum is taken over is infinite. To discard this defect so-called  \emph{irreducible\/} multi-tours are defined~\cite{Eremin}. Notice that for any two multi-tours $\pi$ and $\s$ of the tree $T$ there sum $\pi+\s$ is naturally defined as the consecutive walk along the corresponding Euler cycles. In particular, for any positive integer $n$ the multi-tour  $n\,\pi$ is defined. A multi-tour $\pi$ is called  \emph{irreducible}, if for any positive integer $m$ the multi-tour $m\,\pi$ can not be decomposed into a non-trivial sum of multi-tours, namely, if $m\,\pi=\pi_1+\pi_2$, then $\pi_i=m_i\pi$, $i=1,\,2$, and $m_1+m_2=m$. It can be shown that a binary tree with $n$ boundary vertices has at most $C_{C_n^2}^{2n-3}$ irreducible multi-tours, in particular, the set $\cO_n(T)$ of all irreducible multi-tours of an arbitrary binary tree $T$ is finite. The following result holds, see~\cite{Eremin}.

\begin{ass}\label{ass:mf_formula}
For an arbitrary finite semi-metric space $(X,\r)$ and an arbitrary binary tree $T$ with the boundary $X$ the weight of generalized minimal parametric filling of the type $T$ can be calculated as the following maximum over a finite set of linear functions on the distances between the sets from the space $X${\rm :}
$$
\mpf_-(X,\r,T) = \max_{\pi\in\cO_n(T)}p(X,\r,\pi). 
$$
The weight of minimal filling can be calculated as the following finite minimax\/{\rm :} 
$$
\mf(X,\r) = \min\limits_{T}\max\limits_{\pi\in\cO_n(T)} p(X,\r,\pi),
$$
where the minimum is taken over all the binary trees $T$ with the boundary $X$.
\end{ass}

Theorem~\ref{thm:main}, Assertion~\ref{ass:mf_formula} and analyticity of linear functions imply the following result. 

\begin{cor}\label{cor:mf_param}
Let $(X,\r)$ be an arbitrary semi-metric space, and $\r_t$, $t\in[a,b]$, be a deformation of the semi-metric $\r=\r_{\t_0}$, which is analytic at $t=t_0$. Then 
\begin{enumerate}
\item for any binary tree $T$ with the boundary $X$ there exists a neighborhood  $U$ of the point $t_0\in[a,b]$ such that the sets of multi-tours of the tree $T$ which the weight of generalized minimal parametric filling of the type $T$ of the space $(X,\r_t)$ is attained at are the same for all $t\in U\cap\{t>t_0\}$ \(for all $t\in U\cap\{t<t_0\}$, respectively\) and these types are contained in the set of such multi-tours for $t=t_0$\/\rom;
\item if these sets of multi-tours coincide with each other for some $t\in U\sm\{t_0\}$ and $t_0$ in addition, then these sets are the same for all $t\in U$.
\end{enumerate}
\end{cor}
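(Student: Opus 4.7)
The plan is to reduce the statement to a direct application of Theorem~\ref{thm:main} (in the $I_{\max}$ version mentioned in the Remark after it) for a finite family of linear functionals coming from irreducible multi-tours. First I would identify each semi-metric on the finite set $X$ with a point of $\cD^m\ss\R^m$, where $m=|X|(|X|-1)/2$, so that the hypothesis that $\r_t$ is an analytic deformation at $t_0$ translates exactly into the statement that the curve $\g(t)=\r_t$ in $\cD^m$ is analytic at $t_0$. This step bridges the ``deformation of semi-metric'' framing used here with the ``curve in $\cD^m$'' framing on which Theorem~\ref{thm:main} actually depends.

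Next, for each irreducible multi-tour $\pi\in\cO_n(T)$ of multiplicity $k_\pi$, I would introduce the metric functional
$$
L^\pi(r)=\frac{1}{2k_\pi}\sum_{j\in\Z_{nk_\pi}}r\bigl(\pi(j),\pi(j+1)\bigr),
$$
which is a fixed linear combination of the standard coordinates of $r\in\cD^m$, and hence analytic on all of $\R^m$. The family $\cL=\{L^\pi\mid\pi\in\cO_n(T)\}$ is finite, since the set $\cO_n(T)$ of irreducible multi-tours of $T$ is finite, as recalled just before Assertion~\ref{ass:mf_formula}. By the first formula of Assertion~\ref{ass:mf_formula}, for every $t$ one has $\mpf_-(X,\r_t,T)=\cL_{\max}\bigl(\g(t)\bigr)$, and the set of multi-tours at which the parametric filling weight is attained coincides precisely with $I_{\max}\bigl(\cL,\g(t)\bigr)$.

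At this point both hypotheses of Theorem~\ref{thm:main} are fulfilled: the curve $\g$ lies in the common domain $D=\cD^m$ of the $L^\pi$ and is analytic at $t_0$, while each $L^\pi$ is analytic at $\g(t_0)$. Invoking the $I_{\max}$-analogue of the Main Theorem then yields both conclusions of the present corollary at once: part (1) gives a one-sided neighborhood $U\cap\{t>t_0\}$ (resp.\ $U\cap\{t<t_0\}$) on which $I_{\max}\bigl(\cL,\g(t)\bigr)$ is constant and contained in $I_{\max}\bigl(\cL,\g(t_0)\bigr)$, and part (2) gives the dichotomy claimed in (2).

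The main ``obstacle'' here is really only a matter of recognizing the right reduction: once one observes that the multi-perimeter is a linear (hence globally analytic) functional of the pairwise distances and that Assertion~\ref{ass:mf_formula} expresses $\mpf_-$ as a finite maximum of such functionals, the entire content of the corollary is subsumed by Theorem~\ref{thm:main}, with no further estimates or case analysis required.
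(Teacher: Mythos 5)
Your proposal is correct and follows exactly the route the paper intends: the paper derives this corollary directly from Theorem~\ref{thm:main} (in its $I_{\max}$ form), Assertion~\ref{ass:mf_formula}, and the analyticity of the linear multi-perimeter functionals $p(X,\r,\pi)$ over the finite set $\cO_n(T)$ of irreducible multi-tours. No further comment is needed.
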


\begin{cor}\label{cor:mf_gen}
Let $(X,\r)$ be an arbitrary semi-metric space and $\r_t$, $t\in[a,b]$, be a deformation of the semi-metric $\r=\r_{\t_0}$, which is analytic at $t=t_0$. Then 
\begin{enumerate}
\item there exists a neighborhood $U$ of the point $t_0\in[a,b]$ such that the sets of types of minimal fillings of the space $(X,\r_t)$ are the same for all $t\in U\cap\{t>t_0\}$ \(respectively, for all $t\in U\cap\{t<t_0\}$\), and all these sets are contained in the set of types of minimal filings of the space  $(X,\r_{t_0})$\/\rom;
\item if these sets of types coincide for some $t\in U\sm\{t_0\}$ and $t_0$ in addition, then these sets are the same for all  $t\in U$.
\end{enumerate}
\end{cor}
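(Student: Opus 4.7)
The plan is to reduce to Theorem~\ref{thm:main} by locally linearizing the inner maximum of the minimax formula in Assertion~\ref{ass:mf_formula}. Since $X$ is finite, the set $\cBT(X)$ of binary trees with boundary $X$ and each set $\cO_n(T)$ of irreducible multi-tours are finite, and every $p(X,\cdot,\pi)$ is a linear, hence analytic, metric functional on $\cD^m$. The difficulty is that $\mpf_-(X,\r_t,T)=\max_{\pi}p(X,\r_t,\pi)$ is only piecewise analytic in $t$, so one cannot plug the family $\{\mpf_-(X,\cdot,T)\}_T$ directly into Theorem~\ref{thm:main}.

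First I would invoke Corollary~\ref{cor:mf_param} for each $T\in\cBT(X)$ to obtain a neighborhood $U_T$ of $t_0$ on which the set $I_T^+$ (resp.\ $I_T^-$) of multi-tours realising the maximum in $\mpf_-(X,\r_t,T)$ is constant on $U_T\cap\{t>t_0\}$ (resp.\ on $U_T\cap\{t<t_0\}$) and contained in the set $I_T^0$ at $t_0$. Setting $U_0=\bigcap_T U_T$ and choosing any $\pi_T^{\pm}\in I_T^{\pm}$, I define the linear metric functionals $L^{T,\pm}(\r)=p(X,\r,\pi_T^{\pm})$; by construction $\mpf_-(X,\r_t,T)=L^{T,+}(\g(t))$ on $U_0\cap\{t\ge t_0\}$ and $=L^{T,-}(\g(t))$ on $U_0\cap\{t\le t_0\}$. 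Hence for the finite analytic family $\cL^+=\{L^{T,+}\}$ one has $\cL^+_{\min}(\g(t))=\mf(X,\r_t)$ on $U_0\cap\{t\ge t_0\}$, and $I_{\min}(\cL^+,\g(t))$ coincides with the set of binary types of minimal fillings of $(X,\r_t)$ there; the analogous identity holds for $\cL^-=\{L^{T,-}\}$ on the other side. Applying Theorem~\ref{thm:main} to $\cL^+$ with the curve $\g$ (analytic at $t_0$) yields a neighborhood $V^+\subseteq U_0$ on which $I_{\min}(\cL^+,\g(\cdot))$ is constant on $V^+\cap\{t>t_0\}$ and contained in $I_{\min}(\cL^+,\g(t_0))$; translating back, this is part~(1) on the $+$ side, and combining with the symmetric argument for $\cL^-$ and setting $U=V^+\cap V^-$ completes part~(1).

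For part~(2), a coincidence of type sets at $t_0$ and some $t_1\in U\sm\{t_0\}$, say $t_1>t_0$, translates into $I_{\min}(\cL^+,\g(t_1))=I_{\min}(\cL^+,\g(t_0))$, and part~(2) of Theorem~\ref{thm:main} then makes $I_{\min}(\cL^+,\g(\cdot))$ constant on all of $V^+$. To propagate the equality of filling types back to the $-$ side, I would apply Proposition~\ref{prop:two_analitic_functions} to each analytic difference $L^{T,-}\c\g-L^{T,+}\c\g$, which vanishes at $t_0$ and satisfies one-sided sign constraints coming from the inequality $L^{T,\pm}\le\mpf_-(X,\cdot,T)$ on the opposite side, and deduce, for $T$ in the common $t_0$-index set, that these differences vanish identically near $t_0$. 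The main technical obstacle is precisely this transport across the ``corner'' at $t_0$: because $\mpf_-$ is only piecewise analytic in $t$, Theorem~\ref{thm:main} has to be applied separately on each side and the two one-sided conclusions glued together at $t_0$ via Proposition~\ref{prop:two_analitic_functions}, keeping track throughout of the common $t_0$-index set. Analyticity of $\cL^+_{\min}\c\g$ on $V^+$ finally gives analyticity of the minimal filling weight $\mf(X,\r_t)$ in $t$.
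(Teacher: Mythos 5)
Your reduction for part~(1) is sound, and it is essentially the only reasonable way to flesh out the paper's one-line justification (the paper itself says no more than that the result follows from Theorem~\ref{thm:main}, Assertion~\ref{ass:mf_formula} and linearity of the multi-perimeters). Correctly identifying that $\mpf_-(X,\r_t,T)$ is only piecewise analytic, and replacing it on each half-neighborhood of $t_0$ by the single analytic functional $p(X,\cdot,\pi_T^{\pm})$ for a representative $\pi_T^{\pm}\in I_T^{\pm}\ss I_T^0$ supplied by Corollary~\ref{cor:mf_param}, is exactly what makes Theorem~\ref{thm:main} applicable to the outer minimum; the identification of $I_{\min}(\cL^{\pm},\g(t))$ with the set of minimizing binary types on the corresponding side is also correct.

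The gap is in part~(2), at the step transporting equality of index sets across $t_0$. From the facts that $h_T=L^{T,-}\c\g-L^{T,+}\c\g$ is analytic, vanishes at $t_0$, is $\ge0$ for $t\le t_0$ and $\le0$ for $t\ge t_0$, Proposition~\ref{prop:two_analitic_functions} does \emph{not} let you conclude $h_T\equiv0$: the function $h_T(t)=-(t-t_0)$ satisfies every one of these constraints, falls into the second branch of the dichotomy, and is not identically zero. Worse, at the level of generality at which you argue (arbitrary finite families of analytic functions inside the inner maximum), the two-sided conclusion of part~(2) can actually fail: if one tree has inner maximum $\max\bigl(0,\,t-t_0\bigr)$ and a second tree has inner maximum identically $0$, the minimizing index set is $\{T_1,T_2\}$ at $t_0$ and for all $t<t_0$, but is $\{T_2\}$ for $t>t_0$, so the hypothesis of part~(2) holds with $t_1<t_0$ while the conclusion fails. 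Hence part~(2) cannot be obtained from the sign constraints plus Proposition~\ref{prop:two_analitic_functions} alone; one needs some additional property of multi-perimeters (or of the deformation) forcing the one-sided maximizing sets $I_T^{+}$ and $I_T^{-}$ to yield equal values on both sides. Separately, your closing claim that $\mf(X,\r_t)$ is analytic in $t$ does not follow from your construction (you obtain only one-sided analyticity on each side of $t_0$, with possibly different analytic germs), though that claim is not part of the statement being proved.
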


\begin{cor}\label{cor:mf_eucl}
Let $M_t$, $t\in[a,b]$, be a one-parametric deformation of a finite subset $M_{t_0}=\{m_1,\ldots,m_n\}$ of the space $\R^k$ such that each point $m_i$ moves along the corresponding curve $m_i(t)$ that is analytic at the point  $t_0$. At each set $M_t$ we consider the metric induced from $\R^k$. Then 
\begin{enumerate}
\item there exists a neighborhood $U$ of the point $t_0\in[a,b]$ such that the sets of types of minimal fillings of the space $M_t$ are the same for all $t\in U\cap\{t>t_0\}$ \(respectively, for all $t\in U\cap\{t<t_0\}$\), and all these sets are contained in the set of types of minimal filings of the space  $M_{t_0}$\/\rom;
\item if these sets of types coincide for some $t\in U\sm\{t_0\}$ and $t_0$ in addition, then these sets are the same for all  $t\in U$.
\end{enumerate}
\end{cor}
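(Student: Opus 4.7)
The plan is to deduce the corollary from Corollary~\ref{cor:mf_gen} via Assertion~\ref{ass:Eucl_case}. Although $M_t$ is a moving subset of $\R^k$, the types of minimal fillings form a combinatorial invariant of the underlying abstract semi-metric space, so I may replace the moving configuration by a fixed labelled set carrying a deforming semi-metric, and then apply the general result already established.

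Concretely, I would fix a labelling $V=\{v_1,\ldots,v_n\}$ and define $f_t\:V\to\R^k$ by $f_t(v_i)=m_i(t)$. The pulled-back semi-metric $\rho_t:=f_t^*(\rho_2)$ makes the bijection $f_t\:(V,\rho_t)\to(M_t,\rho_2|_{M_t})$ an isometry, and since a minimal filling is defined as a weighted graph with the underlying finite set as boundary, taken up to boundary-preserving isomorphism, the types of minimal fillings of $M_t$ correspond bijectively, via $f_t$, to the types of minimal fillings of $(V,\rho_t)$. The problem is thus recast as one about a one-parameter deformation $\rho_t$ of a semi-metric on the fixed finite set $V$.

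To apply Corollary~\ref{cor:mf_gen} in this reformulated setting I need the curve $\gamma(t)=\rho_t$ in $\cD^m$ to be analytic at $t_0$. This is supplied by Assertion~\ref{ass:Eucl_case}, whose two hypotheses—that each $m_i(t)$ is analytic at $t_0$ and that the values $m_i(t_0)$ are pairwise distinct—both hold here; the distinctness follows because $M_{t_0}=\{m_1,\ldots,m_n\}$ is by assumption a set of $n$ elements. Corollary~\ref{cor:mf_gen} applied to $(V,\rho_{t_0})$ with the analytic deformation $\rho_t$ yields the two desired conclusions about the sets of types of minimal fillings of $(V,\rho_t)$, which via the identification $f_t$ are precisely the two claims of the present corollary. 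The only potential obstacle is the bookkeeping step of checking that ``types of minimal fillings'' are preserved by the identification $f_t$, but this is immediate from the definitions of the previous section, so after this trivial reformulation the proof is a direct chain of implications from the two cited results.
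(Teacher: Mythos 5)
Your proposal is correct and follows the same route the paper intends: Corollary~\ref{cor:mf_eucl} is obtained from Corollary~\ref{cor:mf_gen} by pulling the Euclidean metric back to a fixed labelled set and invoking Assertion~\ref{ass:Eucl_case} (using analyticity of the curves $m_i(t)$ and distinctness of the points $m_i(t_0)$) to get analyticity of the induced deformation of the semi-metric. The bookkeeping remark about transferring types of minimal fillings along the isometry $f_t$ is indeed immediate and matches the paper's implicit treatment.
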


\section{Segment's Length Derivatives under Linear Deformations}
\markright{\thesection.~Segment's Length Derivatives under Linear Deformations}
Let  $AB$ be an arbitrary non-degenerate straight segment in the Euclidean space $\R^k$. Consider its one-parametric linear deformation $A(t)B(t)$, where $A(t)=A+u\,t$ and $B(t)=B+v\,t$, and the length function $\ell(t)=\|A(t)B(t)\|$. Put $x=B-A$ and $w=v-u$, then $\ell(t)=\sqrt{\<x+w\,t,x+w\,t\>}$.

\begin{ass}\label{ass:deriv1}
If $\ell(t)$ is non-zero, then the function $\ell$ is infinitely differentiable at the point $t$, and under the above notations its first and second derivatives have the form
$$
\ell'(t)=\frac{\<w,x+w\,t\>}{\|x+w\,t\|},\qquad \ell''(t)=\frac{\<w,w\>\<x+w\,t,x+w\,t\>-\<w,x+w\,t\>^2}{\|x+w\,t\|^3}.
$$
In particular,
$$
\ell'(0)=\<w,\tau\>,\qquad \ell''(0)=\frac{\<w,w\>\<x,x\>-\<w,x\>^2}{\|x\|^3}=\frac{\<w,\nu\>^2}{\|x\|},
$$
where $\tau=(B-A)/\|AB\|$ is the unit vector of the segment's direction, and $\nu$ is any unit vector orthogonal to the segment. In particular, the second derivative is always non-negative.
\end{ass}

Now consider a two-parametric linear deformation $A(t)B(s)$ of the segment $[A,B]$, where $A(t)=A+u\,t$, $t\in[-\e,\e]$ and $B(s)=B+v\,s$, $s\in[-\e,\e]$, $\e>0$, and the length function $\ell(s,t)=\|A(t)B(s)\|$. Put $x=B-A$, then $\ell(s,t)=\sqrt{\<x+v\,s-u\,t,x+v\,s-u\,t\>}$.

\begin{ass}\label{ass:deriv2}
If the value $\ell(s,t)$ is non-zero, then the function $\ell$ is infinitely differentiable at the point $(s,t)$, and under the above notations its first partial derivatives  have the form
$$
\frac{\d\ell}{\d t}=-\frac{\<u,x-u\,t+v\,s\>}{\|x-u\,t+v\,s\|},\qquad \frac{\d\ell}{\d s}=\frac{\<v,x-u\,t+v\,s\>}{\|x-u\,t+v\,s\|}.
$$
In particular,
$$
\frac{\d\ell}{\d t}(0,0)=-\<u,\tau\>,\qquad \frac{\d\ell}{\d s}(0,0)=\<v,\tau\>,
$$
where  $\tau=(B-A)/\|AB\|$ is the unit vector of the segment's direction.

The second partial derivatives at the point $(0,0)$ have the form
$$
\frac{\d^2\ell}{\d t^2}(0,0)=\frac{\<x,x\>\<u,u\>-\<u,x\>^2}{\|x\|^3},\qquad
\frac{\d^2\ell}{\d s\,\d t}(0,0)=\frac{\<x,u\>\<x,v\>-\<x,x\>\<u,v\>}{\|x\|^3},
$$
and the formula for $\d^2\ell/\d s^2$ can be obtained from the one for $\d^2\ell/\d t^2$ by changing $u$ to $v$.  The numerators of these expressions are the minors of the Gram matrix of the vectors $\{x,u,v\}$, and the minors corresponding to $\d^2\ell/\d t^2$ and $\d^2\ell/\d s^2$ are principal, and hence, are non-negative.
\end{ass}

\section{Multidimensional Generalizations}
\markright{\thesection.~Multidimensional Generalizations}
Instead of the edge sets one can consider simplicial sets changing the family $V^{(2)}$ by $V^{(k)}$, $k>2$. Then one can consider metric functionals corresponding to the volumes of the Euclidean simplices using Cayley--Menger determinants, instead of edges lengths. For such functionals an analogue of Theorem~\ref{thm:main} also holds.

\section*{Acknowledgments}
The authors are faithfully appreciative to A.~Fomenko for his permanent attention to their work, and also to H.~Rubinstein for fruitful discussions that force the authors to become more attentive to the problems of smooth and analytic deformations. The work is partially supported by  RFBR and RF President Project of Russian Scientific Schools support. 

\markright{References}

\end{document}